\newtheorem{defn}{Definition}[section]
\newtheorem{proposition}[defn]{Proposition}
\newtheorem{corollary}[defn]{Corollary}
\newtheorem{rem}[defn]{Remark}
\newtheorem{exm}[defn]{Example}
\newtheorem{lemma}[defn]{Lemma}
\newtheorem{theorem}[defn]{Theorem}
\newtheorem{notat}[defn]{Notation}
\newtheorem{newpar}[defn]{}
\newtheorem{xdefn}{Definition.}
\newtheorem{xproposition}{Proposition.}
\newtheorem{xcorollary}{Corollary.}
\newtheorem{xrem}{Remark.}
\newtheorem{xexm}{Example.}
\newtheorem{xlemma}{Lemma.}
\newtheorem{xtheorem}{Theorem.}
\newtheorem{xnotat}{Notation.}
\newtheorem{xnewpar}{\it}
\newtheorem{xproof}{{\it Proof. }}
\newtheorem{xproofsketch}{{\it Proof sketch. }}
\newtheorem{xproofof}{{\it Proof}}
\newenvironment{definition}{\begin{defn}\em}{\end{defn}}
\newenvironment{proof}{\begin{xproof}\em}{\end{xproof}}
\newenvironment{newparagraph*}[1]{\begin{xnewpar}\hspace*{-1.5mm}{#1}. \rm}{\end{xnewpar}}
\newenvironment{definition*}{\begin{xdefn}\em}{\end{xdefn}}
\newenvironment{remark*}{\begin{xrem}\em}{\end{xrem}}
\newenvironment{example*}{\begin{xexm}\em}{\end{xexm}}
\newenvironment{notation*}{\begin{xnotat}\em}{\end{xnotat}}
\newenvironment{proposition*}{\begin{xproposition}}{\end{xproposition}}
\newenvironment{corollary*}{\begin{xcorollary}}{\end{xcorollary}}
\newenvironment{lemma*}{\begin{xlemma}}{\end{xlemma}}
\newenvironment{theorem*}{\begin{xtheorem}}{\end{xtheorem}}
\def\qed{\hspace{0.3cm}{\rule{1ex}{2ex}}}
\newcommand\V{\bigvee}
\newcommand\ie{i.e.}
\newcommand\eg{e.g.}
\newcommand\st{\mid}
\newcommand\cf{\textrm{cf.}}
\newcommand\downsegment{{\downarrow}}
\newcommand\pwset[1]{\wp(\,#1)}
\newcommand\opens{\operatorname{\mathcal{O}}}
\newcommand\topology{\operatorname{\Omega}}
\newcommand\groupoid{\operatorname{\mathcal{G}}}
\newcommand\Frm{\mathit{Frm}}
\newcommand\Loc{\mathit{Loc}}
\newcommand\Top{\mathit{Top}}
\newcommand\opp[1]{{#1}^{\textrm{op}}}
\newcommand\ident{\mathrm{id}}
\newcommand\ipi{\mathcal I}
\newcommand\SL{\mathit{SL}}
\newcommand\lcc{\operatorname{{\mathcal L}^{\vee}}}
\newcommand\hide[1]{}
\newcommand\sections{\mathit{\Gamma}}
\newcommand\IQFrm{\mathit{IQFrm}}
\newcommand\InvQuF{\mathit{InvQuF}}
\newcommand\IQLoc{\mathit{IQLoc}}
\newcommand\Grpd{\mathit{Gpd}}
\newcommand\IQLocLax{\mathit{IQLoc}_\ell}
\newcommand\CGrpd{\mathit{Gpd}_{\!\mathit{C}}}
\newcommand\AGrpd{\mathit{Gpd}_{\!\mathit{A}}}
\newcommand\biGrpd{\mathbf{Gpd}}
\newcommand\biIQLoc{\mathbf{IQLoc}}
\newcommand\tp{\operatorname\otimes}
\newcommand\lpl{\operatorname{\mathit{P}_{\mathit L}}}
\newcommand\act{\cdot}
\newenvironment{eq}{\setcounter{equation}{\arabic{defn}}\begin{equation}}{\end{equation}\setcounter{defn}{\arabic{equation}}}
\newenvironment{eqarray}{\setcounter{equation}{\arabic{defn}}\begin{eqnarray}}{\end{eqnarray}\setcounter{defn}{\arabic{equation}}}
\begin{document}

\title{Functoriality of groupoid quantales. I\thanks{Partially funded by FCT/Portugal through projects EXCL/MAT-GEO/0222/2012 and PEst-OE/EEI/LA0009/2013.}}

\author{\sc Pedro Resende}

\date{~}

\maketitle

\begin{abstract}
We provide three functorial extensions of the equivalence between localic \'etale groupoids and their quantales. The main result is a biequivalence between the bicategory of localic \'etale groupoids, with bi-actions as 1-cells, and a bicategory of inverse quantal frames whose 1-cells are bimodules. As a consequence, the category $\InvQuF$ of inverse quantale frames, whose morphisms are the (necessarily involutive) homomorphisms of unital quantales, is equivalent to a category of localic \'etale groupoids whose arrows are the algebraic morphisms in the sense of Buneci and Stachura.
We also show that the subcategory of $\InvQuF$ with the same objects and whose morphisms preserve finite meets is dually equivalent to a subcategory of the category of localic \'etale groupoids and continuous functors whose morphisms, in the context of topological groupoids, have been studied by Lawson and Lenz. \\
\vspace*{-2mm}~\\
\textit{Keywords:} Groupoids, quantales, covering functors, bi-actions, algebraic morphisms\\
\vspace*{-2mm}~\\
2010 \textit{Mathematics Subject
Classification}: 06D22, 06F07, 18B99, 18D05, 20M18, 22A22, 46M15, 54H10
\end{abstract}

{\small{\tableofcontents}}

\section{Introduction}\label{introduction}

Locales \cite{stonespaces} are a point-free version of topological spaces. An example is the locale $I(A)$ of closed ideals of an abelian C*-algebra $A$, which is an algebraic (lattice-theoretic) object that contains all the information about the spectrum of the algebra. In many contexts locales are more convenient to work with than spaces, especially when points, separation axioms, etc., can be ignored. In such situations locales often lead to more general theorems, in particular theorems that are constructive in the sense of being valid in arbitrary toposes \cite{pointless}. One can also think of a locale as being a kind of commutative ring (with the underlying abelian group replaced by a sup-lattice). The similarity to commutative algebra goes a long way and it is at the basis of the groupoid representation of Grothendieck toposes \cite{JT}, in which localic groupoids (\ie, groupoids in the category of locales $\Loc$) arise from toposes via descent.

A generalization of locales is given by quantales \cite{Rosenthal1}, which are semigroups in the category of sup-lattices and thus are like noncommutative rings. The idea that some quantales can be regarded as generalized, and C*-algebra related, point-free spaces has been around since the term ``quantale'' was coined \cite{M86,BRB , Rosicky ,MP1, MP2,K02,KR}, and there is a particularly good interplay between quantales and groupoids \cite{Re07,PR12,PaRe13}. Concretely, the quantale of a topological groupoid $G$ (with open domain map) is the topology of the arrow space $G_1$ equipped with pointwise operations of multiplication and involution. This can be regarded as a convolution ``algebra'', for if we identify each open subset $U\subset G_1$ with a continuous mapping to Sierpi{\' n}ski space $\widetilde U:G_1\to\$$ we obtain \[\widetilde{UV}=\widetilde U*\widetilde V\;,\] where the convolution of two continuous maps $\phi,\psi:G_1\to\$$ is defined by
\[
\phi*\psi(g) = \V_{g=hk} \phi(h)\psi(k)\;.
\]
This construction can be carried over to localic groupoids, and the resulting correspondence between groupoids and quantales restricts to a bijection between localic \'etale groupoids (up to isomorphisms) and inverse quantal frames \cite{Re07}. This is a topological analogue of the dualities of algebraic geometry, with \'etale groupoids playing the role of ``noncommutative varieties''. In particular, any Grothendieck topos coincides, at least in the case of the topos of an \'etale groupoid, with a category of modules over the quantale of the groupoid \cite{GSQS} (see also \cite{HS3} for other quantale representations of Grothendieck toposes). However, this analogy is objects-only because the bijection is not functorial with respect to groupoid functors and quantale homomorphisms, and the main aim of this paper is to address this issue.

This functoriality problem is similar to another, well known, one: locally compact groupoids \cite{RenaultLNMath,Paterson} generalize both locally compact groups and locally compact spaces but, if we take groupoid morphisms to be general functors, this generalization is not functorial with respect to convolution algebras and their homomorphisms. In order to see this it suffices to notice that Gelfand duality yields a contravariant functor from compact Hausdorff spaces to C*-algebras, whereas the universal C*-algebra of a discrete group defines a covariant functor. An interpretation of this discrepancy is that a groupoid C*-algebra can be regarded as a description of the space of orbits (in a generalized sense) of the groupoid and that groupoid functors fail to account for this \cite{Connes}. In addition, for two such spaces to be considered ``the same'' one usually requires the algebras to be only Morita equivalent rather than isomorphic. Accordingly, appropriate definitions of morphism for groupoids, which subsume groupoid functors and map functorially to bimodules, have been defined in terms of bi-actions \cite{HS87,Mr99,MRW87,La01}.
The idea of a groupoid as a generalized space of orbits is even more explicit in topos theory, since any Grothendieck topos is, in a suitable sense, a quotient of the object space of a groupoid in the 2-category of toposes and geometric morphisms \cite{Moer88}. Again, morphisms can be taken to be bi-actions \cite{Bunge, Moer90, Moer87}.

In the present paper we show that the correspondence between groupoids and quantales is functorial in the bicategorical sense suggested by the above remarks.
In order to achieve this we show, in section \ref{funct2}, following preliminary results about groupoid actions in section \ref{gpdactions}, that the bi-actions of localic \'etale groupoids map functorially to quantale bimodules, and that, improving on what would be expected for convolution algebras, this assignment restricts to a biequivalence, namely between the bicategory $\biGrpd$ of localic \'etale groupoids and a bicategory $\biIQLoc$ of inverse quantal frames.

As an example, at the end of section \ref{funct2} we discuss the notion of algebraic morphism of groupoids \cite{BS05,Bun08}. Algebraic morphisms are examples of groupoid bi-actions that map functorially and covariantly to homomorphisms of C*-algebras \cite{BS05} and to homomorphisms of inverse semigroups \cite{BEM12}, and furthermore, as noted in \cite{BS05}, specialize both to group homomorphisms (covariantly) and to continuous maps between topological spaces (contravariantly), hence in a narrower extent suggesting a solution to the functoriality problem addressed in this paper. A corollary of our bicategorical equivalence is that the algebraic morphisms of \'etale groupoids are ``the same'' as the homomorphisms of unital (involutive) quantales between the quantales of the groupoids and yield a category $\AGrpd$ which is equivalent to the category $\InvQuF$ of \cite{Re07}.
For \'etale groupoids all the above remarks (except those pertaining to C*-algebras) follow readily from this identification. Another consequence is that there is a covariant functor from a non-trivial category of quantales to C*-algebras. The existence of such a functor is interesting in its own right, in view of the difficulties that arise with respect to functoriality when studying correspondences between quantales and C*-algebras \cite{KPRR,KR}.

In addition to the above results, and independently from bi-actions and bimodules, we show, in section \ref{funct1}, that the subcategory $\IQFrm$ of $\InvQuF$ whose morphisms are also locale homomorphisms is dually equivalent to a category $\CGrpd$ of \'etale groupoids whose morphisms, in the topological context, coincide with the covering functors \cite{LL}.

Other functorial aspects of groupoid quantales, for instance regarding Hilsum--Skandalis maps and Morita equivalence, will be addressed in a subsequent paper.

\section{Preliminaries}\label{groupoidsandquantales}

In this section we introduce basic facts, terminology and notation for sup-lattices, locales and groupoid quantales, mostly following \cite{JT,stonespaces,Re07,GSQS}.

\subsection{Locales}

By a \emph{sup-lattice} is meant a complete lattice, and a \emph{sup-lattice homomorphism} $h:Y\to X$ is a mapping that preserves arbitrary joins. The resulting \emph{category of sup-lattices} $\SL$ is bi-complete and monoidal \cite{JT}. The top element of a sup-lattice $X$ is denoted by $1_X$ or simply $1$, and the bottom element by $0_X$ or simply $0$.
A sup-lattice further satisfying the infinite distributive law
\[
x\wedge\V_{\alpha} y_\alpha = \V_{\alpha} x\wedge y_\alpha
\]
is a \emph{frame}, or \emph{locale},
and a \emph{frame homomorphism} $h:Y\to X$ is a sup-lattice homomorphism that preserves finite meets. This defines the \emph{category of frames}, $\Frm$.
The dual category $\Loc=\opp\Frm$ is referred to as the \emph{category of locales} \cite{stonespaces}, and its arrows are called \emph{continuous maps}, or simply \emph{maps}. These categories are bi-complete, and the product of $X$ and $Y$ in $\Loc$ is denoted by $X\tp Y$, since it coincides with the tensor product in $\SL$ \cite[\S I.5]{JT}.
The coproduct of $X$ and $Y$ in $\Loc$ is the direct sum in $\SL$ and we denote it by $X\oplus Y$.

A \emph{subframe} of $X$ is a subset closed under finite meets and arbitrary joins, whereas a \emph{sublocale} is an equivalence class of a surjective frame homomorphism, or, equivalently, a \emph{nucleus} on $X$, by which is meant a closure operator $j$ on $X$ satisfying the law $j(x\wedge y)=j(x)\wedge j(y)$. An example is the \emph{open sublocale} associated to an element $s\in X$, which corresponds to the frame surjection
$X\to \downsegment (s)=\{x\in X\st x\le s\}$
defined by $x\mapsto x\wedge s$.

If $f:X\to Y$ is a map of locales we refer to the corresponding frame homomorphism $f^*:Y\to X$ as its \emph{inverse image}. Such a homomorphism turns $X$ into a \emph{$Y$-module} (in the sense of quantale modules --- see \ref{quantalesandmodules}) with action $y\cdot x=f^*(y)\wedge x$ for all $y\in Y$ and $x\in X$, and the map $f$ is \emph{open} if $f^*$ has a left adjoint $f_!:X\to Y$, referred to as the \emph{direct image} of $f$, which is a homomorphism of $Y$-modules. A \emph{local homeomorphism} $f:X\to Y$ is a (necessarily open) map for which there is a subset $\sections\subset X$ satisfying $\V \sections =1$ (a \emph{cover} of $X$) such that for each $s\in \sections$ the direct image $f_!$ restricts to an isomorphism $\downsegment (s)\cong\downsegment (f_!(s))$.
Both open maps and local homeomorphisms are stable under pullbacks.

\subsection{Groupoids}

A \emph{localic groupoid} is an internal groupoid in $\Loc$. We denote the locales of objects and arrows of a localic groupoid $G$ respectively by $G_0$ and $G_1$, and adopt the following notation for the structure maps,
\[G\ \ \ \ =\ \ \ \ \xymatrix{
G_2\ar[rr]^-m&&G_1\ar@(ru,lu)[]_i\ar@<1.2ex>[rr]^r\ar@<-1.2ex>[rr]_d&&G_0\ar[ll]|u
}\;,\]
where $G_2$ is the pullback of the \emph{domain} and \emph{range} maps:
\[\vcenter{\xymatrix{G_2\ar[r]^{\pi_1}\ar[d]_{\pi_2}&G_1\ar[d]^r\\
G_1\ar[r]_d&G_0}}\;.\]
We remark that, since $G$ is a groupoid rather than just an internal category, the \emph{multiplication map} $m$ is a pullback of $d$ along itself:
\[\vcenter{\xymatrix{G_2\ar[r]^{\pi_1}\ar[d]_{m}&G_1\ar[d]^d\\
G_1\ar[r]_d&G_0}}\;.\]
A localic groupoid $G$ is said to be \emph{open} if $d$ is an open map. Hence, if $G$ is open, $m$ is also an open map. An \emph{\'etale groupoid} is an open groupoid such that $d$ is a local homeomorphism, in which case all the structure maps are local homeomorphisms and, hence, $G_0$ is isomorphic to an open sublocale of $G_1$. Conversely, any open groupoid for which $u$ is an open map is necessarily \'etale \cite[Corollary 5.12]{Re07}.

Similar conventions and remarks apply to topological groupoids, which are the internal groupoids in $\Top$. We remark that, keeping with \cite{Re07,GSQS}, our usage of $d$ and $r$ is reversed with respect to the typical conventions for groupoid C*-algebras.

The category whose objects are the localic \'etale groupoids and whose morphisms are the internal functors in $\Loc$ will be denoted by $\Grpd$. The following proposition will be useful later on:

\begin{proposition}\label{laxfunlemma}
Let $G$ and $H$ be \'etale groupoids and let
\begin{eqnarray*}
f_0:G_0\to H_0\\
f_1:G_1\to H_1
\end{eqnarray*}
be two maps of locales that satisfy the following properties:
\begin{eqarray}
f_1\circ i &=&i\circ f_1\;;\label{fun-i}\\
f_1\circ u &=& u\circ f_0\;;\label{fun-u}\\
f_0\circ d &=& d\circ f_1\;;\label{fun-d}\\
m\circ(f_1\tp f_1) &\le & f_1\circ m\;.\label{fun-m}
\end{eqarray}%
Then the pair $(f_0,f_1)$ is a functor of groupoids.
\end{proposition}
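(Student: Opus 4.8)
The plan is to verify the internal-functor axioms one at a time, the only substantial point being to promote the lax inequality (\ref{fun-m}) to an equality. Compatibility with $d$ and $u$ are the hypotheses (\ref{fun-d}) and (\ref{fun-u}), and compatibility with $i$ is the hypothesis (\ref{fun-i}) (it would be automatic for a functor of groupoids, but here I shall use it as an input). Compatibility with $r$ I would read off immediately from the groupoid identity $r=d\circ i$ together with (\ref{fun-d}) and (\ref{fun-i}):
\[
f_0\circ r = f_0\circ d\circ i = d\circ f_1\circ i = d\circ i\circ f_1 = r\circ f_1\;.
\]
The same two identities show that $\op{f_1\circ\pi_1,\,f_1\circ\pi_2}$ factors through the pullback $H_2$, since $r\circ f_1\circ\pi_1 = f_0\circ r\circ\pi_1 = f_0\circ d\circ\pi_2 = d\circ f_1\circ\pi_2$; thus the map $f_1\tp f_1:G_2\to H_2$ occurring in (\ref{fun-m}) is well defined. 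It remains to prove the reverse of (\ref{fun-m}).

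The device I would use is the translation isomorphism $\theta_G=\op{m,\,i\circ\pi_2}:G_2\to G_2$, and likewise $\theta_H$, which the groupoid axioms guarantee is a well-defined involution (it encodes the cancellation $(ab)b^{-1}=a$) satisfying $m\circ\theta_G=\pi_1$, while $\pi_1\circ\theta_G=m$ and $\pi_2\circ\theta_G=i\circ\pi_2$ hold by definition. The first step is to rephrase the hypothesis as the inequality
\[
\theta_H\circ(f_1\tp f_1)\ \le\ (f_1\tp f_1)\circ\theta_G
\]
of maps $G_2\to H_2$. This is legitimate because the two sides have the \emph{same} second component, both equal to $f_1\circ i\circ\pi_2$ by (\ref{fun-i}), whereas their first components are exactly $m\circ(f_1\tp f_1)$ and $f_1\circ m$; since the order on maps into the pullback $H_2\subset H_1\tp H_1$ is computed componentwise, this inequality says precisely (\ref{fun-m}).

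Now I would reverse it. Writing $P=f_1\tp f_1$, the display reads $\theta_H\circ P\le P\circ\theta_G$. Because the (pointwise, inverse-image) order on locale maps is preserved by pre- and post-composition, and $\theta_G,\theta_H$ are involutive isomorphisms, post-composing with $\theta_H$ gives $P\le\theta_H\circ P\circ\theta_G$, and then pre-composing with $\theta_G$ gives $P\circ\theta_G\le\theta_H\circ P$, the opposite of the display. Post-composing this last inequality with $\pi_1$ and using $\pi_1\circ\theta_H=m$, $\pi_1\circ P=f_1\circ\pi_1$ and $\pi_1\circ\theta_G=m$ yields $f_1\circ m\le m\circ(f_1\tp f_1)$, which together with (\ref{fun-m}) gives the sought equality $m\circ(f_1\tp f_1)=f_1\circ m$. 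With compatibility established for $d$, $r$, $u$ and $m$, the pair $(f_0,f_1)$ is a functor.

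The main obstacle is exactly this promotion of (\ref{fun-m}) to an equality, and it is where the groupoid hypothesis is essential: a merely lax functor between internal categories need not be strict. The whole weight of the argument rests on the involutivity of the translation isomorphism $\theta$, which is what turns the single available inequality into its reverse. I would be careful to fix at the outset that the order on maps throughout is the pointwise (inverse-image) order, for which composition is monotone in each variable and isomorphisms are order-reflecting, since these are the only properties the reversal uses. The ancillary facts (well-definedness and involutivity of $\theta$ from the groupoid axioms, monotonicity of composition, and the componentwise description of the order on maps into a pullback) are routine and I would dispatch them quickly.
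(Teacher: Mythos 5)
Your proof is correct and takes essentially the same route as the paper's: everything reduces to promoting (\ref{fun-m}) to an equality via the cancellation $(xy)y^{-1}=x$, which is exactly the paper's point-set chain $f_1(xy)=f_1(xy)f_1(y^{-1})f_1(y)\le f_1(xyy^{-1})f_1(y)=f_1(x)f_1(y)$. Your translation involution $\theta=\op{m,\,i\circ\pi_2}$ is simply a clean, point-free packaging of that computation --- in effect you carry out explicitly the conversion to locale maps that the paper declares ``tedious but straightforward''.
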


\begin{proof}
All we have to do is prove that the above inequality is in fact an equality. In point-set notation this follows from a simple series of inequalities:
\begin{eqnarray*}
f_1(x)f_1(y)&\le& f_1(xy)=f_1(xy)f_1(y)^{-1}f_1(y)
=f_1(xy)f_1(y^{-1})f_1(y)\\
&\le& f_1(xyy^{-1})f_1(y)=f_1(x)f_1(y)\;.
\end{eqnarray*}
Converting this to an explicit argument about locale maps is tedious but straightforward. \qed
\end{proof}

\subsection{Quantales}\label{quantalesandmodules}

By an \emph{involutive quantale} $Q$ is meant an involutive semigroup in $\SL$. In particular, the multiplication is a sup-lattice homomorphism
$\mu:Q\tp Q\to Q$, and
we shall adopt the following terminology and notation:

\begin{itemize}
\item The \emph{product} $\mu(a\tp b)$ of two elements $a,b\in Q$ is denoted by $ab$.
\item The involute of an element $a\in Q$ is denoted by $a^*$.
\item The involutive quantale $Q$ is \emph{unital} if there is a unit for the multiplication, which is denoted by $e_Q$ or simply $e$.
\item By a \emph{homomorphism} of involutive quantales $h:Q\to R$ is meant a homomorphism of involutive semigroups in $\SL$. If $Q$ and $R$ are unital, the homomorphism $h$ is \emph{unital} if $h(e_Q)=e_R$.
\end{itemize}

Given a unital involutive quantale $Q$, by a \emph{(left) $Q$-module} will be meant a sup-lattice $M$ equipped with a unital associative left action $Q\otimes M\to M$ in $\SL$ (the involution of $Q$ plays no role). The action of an element $a\in Q$ on $x\in M$ is denoted by $ax$ or, sometimes, for the sake of clarity, $a\act x$. By a \emph{homomorphism} of left $Q$-modules $h:M\to N$ is meant a $Q$-equivariant homomorphism of sup-lattices.

An involutive quantale can be associated to any open localic groupoid $G$ because,
since the multiplication map $m$ is open, there is a sup-lattice homomorphism defined as the following composition (in $\SL$):
\[\xymatrix{G_1 \tp G_1\ar@{->>}[r]& G_2\ar[r]^-{m_!}& G_1}\;.\]
This defines an associative multiplication on $G_1$ which, together with the isomorphism $G_1\stackrel{i_!}\to G_1$, turns $G_1$ into an involutive quantale $\opens(G)$ --- the ``opens of $G$''. This is unital if and only if $G$ is \'etale \cite[Corollary 5.12]{Re07}, in which case the unit is $e=u_!(1)$ and $u_!$ defines an order-isomorphism
$u_!: G_0\stackrel\cong\longrightarrow
\downsegment(e)$.
Hence, in particular, $\downsegment(e)$ is a frame.

The quantales associated in this way to \'etale groupoids are the \emph{inverse quantal frames} \cite{Re07}. They are precisely the unital involutive quantales $Q$ that are also frames and for which the following properties hold:
\begin{enumerate}
\item $a1\wedge e= aa^*\wedge e$ for all $a\in Q$;
\item $(a1\wedge e)a=a$ for all $a\in Q$;
\item $\V Q_\ipi=1$, where $Q_\ipi=\{s\in Q\st ss^*\vee s^*s\le e\}$ is the set of \emph{partial units} of $Q$.
\end{enumerate}
We note that $Q_\ipi$ is an infinitely distributive inverse semigroup (see \cite{Lawson}) whose idempotents are such that $E(Q_\ipi)=\downsegment(e)$. The latter is called the \emph{base locale} of $Q$ and we denote it by $Q_0$. For all $b\in Q_0$ and $a\in Q$ we have
\begin{eq}\label{iqfprop}
ba = b1\wedge a \textrm{ and } ab=1b\wedge a\;.
\end{eq}%
We also have
\begin{eq}\label{lcc}
Q\cong\lcc(Q_\ipi)\;,
\end{eq}%
where the right hand side is the join-completion of $Q_\ipi$ that preserves the joins of compatible sets (a subset $S$ of an inverse semigroup is \emph{compatible} if for all $s,t\in S$ both $st^*$ and $s^*t$ are idempotents --- see \cite{Lawson}).

A converse construction exists that assigns a localic \'etale groupoid $\groupoid(Q)$ to each inverse quantal frame $Q$, and we have, for all \'etale groupoids $G$ and all inverse quantal frames $Q$, an equivalence as follows \cite{Re07}:
\begin{eq}\label{gpdiqfequiv}
\opens(\groupoid(Q))=Q\ , \ \ \ \ \ \ \groupoid(\opens(G))\cong G\;.
\end{eq}%
Similarly, the topology $\topology(G_1)$ of a topological \'etale groupoid $G$ is an inverse quantal frame (and a spatial locale).
The \emph{category of inverse quantal frames} $\InvQuF$ \cite{Re07} has the homomorphisms of unital involutive quantales as morphisms. There are no other unital homomorphisms:

\begin{proposition}\label{necessarilyinv}
Any homomorphism of unital quantales $h:Q\to R$ between inverse quantal frames is necessarily involutive.
\end{proposition}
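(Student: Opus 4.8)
The plan is to reduce everything to the inverse semigroup $Q_\ipi$ of partial units and then exploit the fact that a semigroup homomorphism whose codomain is an inverse semigroup automatically preserves inverses. First I would record the reduction. Since $\V Q_\ipi=1$ and $Q$ is a frame, every $a\in Q$ satisfies $a=a\wedge\V Q_\ipi=\V_{t\in Q_\ipi}(a\wedge t)$; because $Q_\ipi$ is downward closed (if $t\le s\in Q_\ipi$ then $tt^*\le ss^*\le e$ and $t^*t\le e$, the involution being order preserving), each $a\wedge t$ is again a partial unit, so $a=\V\{t\in Q_\ipi\st t\le a\}$. As the involution of an involutive quantale is a sup-lattice isomorphism and $Q_\ipi$ is closed under it, $a^*=\V\{t^*\st t\le a,\ t\in Q_\ipi\}$. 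Applying $h$, which preserves joins, and using that the involution of $R$ also preserves joins, I get $h(a^*)=\V\{h(t^*)\st t\le a\}$ and $h(a)^*=\V\{h(t)^*\st t\le a\}$. Hence it suffices to prove $h(t^*)=h(t)^*$ for every partial unit $t$.

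Second, I would set $a=h(t)$ and $b=h(t^*)$ and translate the inverse-semigroup identities. Since $h$ is a unital homomorphism of quantales it preserves products and the unit, so from $tt^*\le e$, $t^*t\le e$, $tt^*t=t$ and $t^*t t^*=t^*$ in $Q$ I obtain, in $R$,
\[ ab\le e,\quad ba\le e,\quad aba=a,\quad bab=b. \]
Thus $b$ is a semigroup inverse of $a$ whose one-sided products with $a$ lie below $e$. The goal $b=a^*$ is symmetric in the pair $(a,b)$ (the four relations are invariant under the swap $a\leftrightarrow b$), and since the involution of $R$ is an order isomorphism, $b=a^*$ will follow once I establish the single inequality $a^*\le b$: applying that inequality to the swapped pair yields $b^*\le a$, hence $b=b^{**}\le a^*$, and the two inequalities give $b=a^*$.

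Third, to prove $a^*\le b$ I would again use join-density, writing $a^*=\V\{s^*\st s\le a,\ s\in R_\ipi\}$, so that it is enough to show $s^*\le b$ for each partial unit $s\le a$ of $R$. This is the step I expect to be the \emph{main obstacle}, because $h$ preserves neither meets nor residuals, so the bound cannot be produced by naive algebra with $ab,ba\le e$ alone; indeed those two relations by themselves only yield bounds such as $s^*\le 1(sb)$, which are circular. The genuine input is the identity $a=aba$: via the representations $Q\cong\lcc(Q_\ipi)$ and $R\cong\lcc(R_\ipi)$ this identity expands as a join of products of partial units lying below $a$ and $b$, and confronting a partial unit $s\le a$ with the constraints $ab\le e$ and $ba\le e$ forces the two outer factors of its factorization to collapse onto $s$ and the middle factor to equal $s^*$, whence $s^*\le b$. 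Equivalently, this computation shows $h(Q_\ipi)\subseteq R_\ipi$, so that $h$ restricts to a semigroup homomorphism $Q_\ipi\to R_\ipi$ into an inverse semigroup; the desired identity $h(t^*)=h(t)^*$ is then immediate from the standard fact that such a homomorphism carries the unique inverse of $t$ to the unique inverse of $h(t)$. Thus the only substantive point is the point-free justification of this factorization argument; the two reductions above are routine bookkeeping.
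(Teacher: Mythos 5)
Your two reductions are sound and they match the paper's strategy exactly: the paper's proof just says that $h$ restricts to a homomorphism of inverse semigroups $Q_\ipi\to R_\ipi$, that such a homomorphism preserves inverses, and that every element is a join of partial units. Your derivation of the relations $ab\le e$, $ba\le e$, $aba=a$, $bab=b$ for $a=h(t)$, $b=h(t^*)$ is also correct, as is the observation that by the $a\leftrightarrow b$ symmetry a single inequality suffices.

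The gap is exactly where you flag it, and the route you sketch for closing it does not work. Expanding $a=aba$ in $\lcc(R_\ipi)$ gives $a=\V_{i,j,k}v_iw_jv_k$ with $v_i,v_k$ ranging over partial units below $a$ and $w_j$ over partial units below $b$; a partial unit $s\le a$ is then merely a join of meets $s\wedge v_iw_jv_k$, and nothing forces the outer factors to ``collapse onto $s$'' or the middle one to equal $s^*$. If you push that computation (using that $v_iw_j$ and $w_jv_k$ are idempotents, hence self-adjoint), you find you need $v_k^*v_i\le e$ for \emph{distinct} partial units $v_i,v_k\le a$, i.e.\ pairwise compatibility of the partial units below $a$ --- but that is essentially the assertion $a\in R_\ipi$ you are trying to prove, so the argument is circular.

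The missing ingredient is the support axiom of inverse quantal frames, $a1\wedge e=aa^*\wedge e$: applying it to $a^*$ and then applying the involution yields $1a\wedge e=a^*a\wedge e$. Since $b\le 1$ and $ba\le e$, this gives $ba\le 1a\wedge e=a^*a\wedge e\le a^*a$, whence
\[
b=bab=(ba)b\le a^*ab=a^*(ab)\le a^*e=a^*\;.
\]
Applying the same computation to the swapped pair gives $a\le b^*$, hence $a^*\le b$ by your symmetry observation, so $b=a^*$; in particular $ab=aa^*\le e$ and $ba=a^*a\le e$, so $h(t)\in R_\ipi$ and $h(t^*)=h(t)^*$. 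This is the fact the paper's one-line proof takes for granted, and no factorization into partial units is needed for it.
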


\begin{proof}
Since $h$ is unital it restricts to a homomorphism of inverse semigroups $Q_\ipi\to R_\ipi$. This necessarily preserves inverses and, since every element of an inverse quantal frame is a join of partial units, the conclusion follows. \qed
\end{proof}

\subsection{Actions}\label{secgrpdact}

Let $G$ be an \'etale groupoid. A \emph{left $G$-locale} $(X,p,\mathfrak a)$ consists of a map of locales $p:X\to G_0$, called the \emph{anchor map}, together with a map of locales
\[
\mathfrak a: G_1\tp_{G_0} X\to X\;,
\]
called the \emph{action}, where $G_1\tp_{G_0} X$ is the pullback of $r$ and $p$ in $\Loc$, satisfying the axioms for actions of internal categories, such as associativity (see \eg\ \cite[section 3.1]{GSQS}). The structure $(X,p,\mathfrak a)$ will often be denoted simply by $(X,p)$, or only $X$, when no confusion will arise. A right $G$-locale is defined similarly, with $X\tp_{G_0} G_1$ being the pullback of $p$ and $d$ in $\Loc$.
The category of left $G$-locales and equivariant maps between them is denoted by $G$-$\Loc$.

Similarly to the passage from \'etale groupoids to quantales, each left $G$-locale yields a left $\opens(G)$-module whose action is the sup-lattice homomorphism defined by the direct image of $\mathfrak a$ (which exists because $\mathfrak a$ is a pullback of $d$ along $p$):
\[\xymatrix{G_1 \tp X\ar@{->>}[r]& G_1\tp_{G_0} X\ar[r]^-{{\mathfrak a}_!}& X}\;.\]
In order to simplify notation let us write $Q$ instead of $\opens(G)$. We denote the left $Q$-module associated to a left $G$-locale $(X,p,\mathfrak a)$ by $X$ (rather than $\opens(X)$ as in \cite{GSQS}). This is a \emph{(left) $Q$-locale}, by which is meant a locale $X$ that is also a unital left $Q$-module satisfying the following \emph{anchor condition} for all $b\in Q_0$ and $x\in X$:
\begin{eq}\label{Qlocaleproperty}
bx=b1\land x\;.
\end{eq}%
The \emph{category of left $Q$-locales} \cite{GSQS} has the left $Q$-locales as objects, and the morphisms are the maps of locales whose inverse images are homomorphisms of left $Q$-modules. This category is denoted by $Q$-$\Loc$ and it is isomorphic to $G$-$\Loc$. The following equivalent formulas for the inverse image of the action will be needed later on:
\begin{eqarray}
{\mathfrak a}^*(x) &=& \V\{a\tp y\in Q\tp_{Q_0} X\st ay\le x\}\label{rightadjoint}\;;\\
{\mathfrak a}^*(x)&=&\V_{s\in Q_\ipi} s\otimes s^*x\;.\label{alphastareq}
\end{eqarray}%
Similar facts hold for right actions:
\begin{eqarray}
{\mathfrak a}^*(x) &=& \V\{y\tp a\in X\tp_{Q_0} Q\st ya\le x\}\label{rightadjointright}\;;\\
{\mathfrak a}^*(x)&=&\V_{s\in Q_\ipi} xs^*\tp s\;.\label{alphastareqright}
\end{eqarray}%
We conclude this overview of groupoid actions by looking at a few simple properties of $Q$-locales.
Eq.\ (\ref{Qlocaleproperty}) immediately implies both distributivity and ``middle-linearity'' of the action of the locale $Q_0$ over binary meets, for all $b\in Q_0$ and $x,y\in X$:
\begin{eqarray}
&b(x\wedge y)=b1\wedge x\wedge y=(b1\wedge x)\wedge(b1\wedge y)=bx\wedge by\;; \label{Bdist}\\
&bx\wedge y=bx\wedge by=x\wedge by\;.
\end{eqarray}%
Generalizing this to partial units we obtain:
\begin{proposition}\label{littlelemma}
Let $X$ be a $Q$-locale. For all $s\in Q_\ipi$ and $x,y\in X$, we have
\begin{enumerate}
\item $s(x\wedge y) = sx\wedge sy$, \label{dist1}
\item $s(x\wedge s^* y) = sx\wedge y$. \label{dist2}
\end{enumerate}
\end{proposition}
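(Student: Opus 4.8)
The plan is to reduce both items to a single ``reciprocity'' identity and then extract them as special cases. Throughout I would use only monotonicity of the action (which holds in each variable because the action is a sup-lattice homomorphism) together with the anchor condition (\ref{Qlocaleproperty}); crucially I would \emph{not} assume at any point that $s^*$ distributes over meets, since that is essentially what is being proved and would make the argument circular.

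First I would record three elementary facts about a partial unit $s\in Q_\ipi$. Since $s^*s$ and $ss^*$ lie in $Q_0=\downsegment(e)$, the anchor condition gives, for all $u,v\in X$,
\[
s^*su = (s^*s)1\wedge u\le u, \qquad ss^*v = (ss^*)1\wedge v\le v,
\]
and, using the inverse-semigroup law $s=ss^*s$ together with associativity of the action,
\[
su = (ss^*s)u = (ss^*)(su) = (ss^*)1\wedge su,
\]
so that $su\le (ss^*)1$. The inequalities $\le$ in both items are then immediate from monotonicity: in (\ref{dist1}), $s(x\wedge y)\le sx\wedge sy$; and in (\ref{dist2}), $s(x\wedge s^*y)\le sx$ while also $s(x\wedge s^*y)\le ss^*y\le y$, so $s(x\wedge s^*y)\le sx\wedge y$.

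The heart of the argument is the reverse inequality, which I would package as the single identity
\[
sx\wedge z = s(x\wedge s^*z)\qquad(s\in Q_\ipi,\ x,z\in X).
\]
To prove $sx\wedge z\le s(x\wedge s^*z)$, write $w=sx\wedge z$ and act by $s^*$: monotonicity and the facts above yield $s^*w\le s^*sx\le x$ and $s^*w\le s^*z$, hence $s^*w\le x\wedge s^*z$; acting by $s$ and using monotonicity gives $ss^*w\le s(x\wedge s^*z)$. But $ss^*w=(ss^*)1\wedge w$ by the anchor condition, and since $w\le sx\le (ss^*)1$ this is just $w$. Thus $w\le s(x\wedge s^*z)$, and the opposite inequality has already been noted.

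Item (\ref{dist2}) is then the special case $z=y$. For item (\ref{dist1}) I would take $z=sy$ in the identity, obtaining $sx\wedge sy = s(x\wedge s^*sy)$, and simplify the right-hand side using $s^*sy=(s^*s)1\wedge y$ and $s((s^*s)1\wedge u)=(ss^*s)u=su$: with $u=x\wedge y$ this gives $s(x\wedge s^*sy)=s\bigl((s^*s)1\wedge(x\wedge y)\bigr)=s(x\wedge y)$. The main obstacle is exactly the reverse inequality in the reciprocity identity; the only idea it requires is to re-absorb the factor $ss^*$ produced when $s^*$ is passed back through $s$, which is possible precisely because $su\le(ss^*)1$ forces $(ss^*)1\wedge w=w$ whenever $w\le sx$.
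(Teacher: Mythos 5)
Your proof is correct, and every step uses only what is legitimately available at this point (monotonicity and associativity of the action, the inverse-semigroup law $s=ss^*s$, and the anchor condition (\ref{Qlocaleproperty})); in particular the worry about circularity is properly handled, since you never assume $s^*$ distributes over meets. The underlying mechanism is the same as the paper's --- insert the idempotent $ss^*$ and exploit the fact that for $b\in Q_0$ the action of $b$ is meet with $b1$ --- but the logical organization is reversed. The paper proves item (\ref{dist1}) first, via the chain $sx\wedge sy=ss^*sx\wedge ss^*sy=ss^*(sx\wedge sy)\le s(s^*sx\wedge s^*sy)=ss^*s(x\wedge y)=s(x\wedge y)$ using the $Q_0$-distributivity (\ref{Bdist}), and then deduces item (\ref{dist2}) from item (\ref{dist1}) together with middle-linearity. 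You instead prove item (\ref{dist2}) first, in the slightly more general form $sx\wedge z=s(x\wedge s^*z)$, by observing that $w=sx\wedge z$ satisfies $ss^*w=w$ (because $w\le sx\le (ss^*)1$) while $s^*w\le x\wedge s^*z$, and then recover item (\ref{dist1}) by the substitution $z=sy$. The two routes have essentially the same length and ingredients; yours has the small advantage of isolating the single nontrivial inequality in one ``reciprocity'' identity from which both items drop out by specialization, at the cost of re-deriving on the fly the instance of (\ref{Bdist}) needed in the final simplification $s(x\wedge s^*sy)=s(x\wedge y)$.
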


\begin{proof}
The inequality $s(x\wedge y)\le sx\wedge sy$ follows immediately from the monotonicity of the action. For the converse inequality, we use the distributivity \ref{Bdist}, with $b=ss^*$, in order to prove \ref{dist1}:
\begin{eqnarray*}
sx\wedge sy &=& ss^*sx\wedge ss^*sy = ss^*(sx\wedge sy)\\
&\le& s(s^*sx\wedge s^*sy)=ss^*s(x\wedge y)\\
&=&s(x\wedge y)\;.
\end{eqnarray*}
Condition \ref{dist2} follows easily: $sx\wedge y = ss^*sx\wedge y=sx\wedge ss^* y=s(x\wedge s^* y)$. \qed
\end{proof}

\section{Functoriality I}\label{funct1}

We begin by briefly addressing the extent to which the correspondence between \'etale groupoids and quantales is functorial with respect to groupoid functors, going a bit beyond \cite{Re07} by showing that, although the assignment from \'etale groupoids to quantales is not functorial unless quantale homomorphisms are ``lax'', the assignment from inverse quantal frames to groupoids is. A similar fact has been noticed in \cite{LL}, in the context of topological groupoids and inverse semigroups.

\subsection{Group homomorphisms}\label{grouphoms}

A similar discrepancy to the one we alluded to in the introduction occurs when relating localic groupoids and quantales. On one hand, the (tautological) functor from $\Loc$ to $\Frm$ is contravariant, whereas, on the other hand, it is the covariant powerset functor (rather than the contravariant one) which gives us a functor from the category of groups to the category of unital involutive quantales.
More than that, the covariant powerset functor is left adjoint to the functor that to each unital quantale $Q$ assigns its groups of units
\[Q^{\times}=\{a\in Q\st ab=e\textrm{ for some }b\in Q\}\;,\]
and thus the group homomorphisms can be identified with homomorphisms of unital quantales:
\[
\hom(G,H)\cong\hom(\pwset G,\pwset H)\;.
\]
Moreover, for each discrete group $G$ we have $G\cong\pwset G^{\times}$ (the adjunction is a co-reflection).

On the contrary, the contravariant powerset functor behaves poorly with respect to group homomorphisms:

\begin{lemma}
The homomorphisms $f:G\to H$ of discrete groups whose inverse image mappings $f^{-1}:\pwset H \to\pwset G$
are homomorphisms of unital quantales are precisely the isomorphisms.
\end{lemma}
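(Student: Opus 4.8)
The plan is to prove the two implications separately, viewing each group as a one-object groupoid so that $\pwset G=\opens(G)$ is an inverse quantal frame with product $AB=\{ab\mid a\in A,\ b\in B\}$, unit $e=\{1\}$, and involution $A^{*}=\{a^{-1}\mid a\in A\}$. For the easy implication I would start from an isomorphism $f$ with inverse group homomorphism $g=f^{-1}:H\to G$ and observe that the inverse-image map on powersets coincides with the \emph{direct}-image (covariant) powerset of $g$, since $f(x)\in B\iff x\in g(B)$. As recalled just before the statement, the covariant powerset of a group homomorphism is a homomorphism of unital involutive quantales, so $f^{-1}$ is one; this direction needs no real work.

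For the substantive implication I would assume that $f^{-1}:\pwset H\to\pwset G$ is a homomorphism of unital quantales and deduce that $f$ is bijective. Note first that preservation of arbitrary joins is automatic for any inverse-image map and that involution need not be assumed, being automatic by Proposition \ref{necessarilyinv}; so the only hypotheses carrying information are preservation of the unit and of the product. Unit preservation gives $f^{-1}(\{1_H\})=\{1_G\}$, and since $f^{-1}(\{1_H\})=\ker f$ this says exactly that $f$ is injective.

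The main step, which I expect to be the crux, is to extract surjectivity from preservation of products, and here I would exploit the complementary pair $\{b\}$ and $\{b^{-1}\}$ for an arbitrary $b\in H$. On one side $\{b\}\{b^{-1}\}=\{1_H\}$, so the left-hand value $f^{-1}(\{1_H\})=\ker f$ contains $1_G$ and is in particular nonempty. On the other side, in the quantale $\pwset G$ one has $\emptyset\cdot S=\emptyset$ (the bottom element is absorbing for the product), so $f^{-1}(\{b\})\,f^{-1}(\{b^{-1}\})$ can be nonempty only if the fibre $f^{-1}(\{b\})$ is itself nonempty, i.e.\ only if $b\in f(G)$. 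Since $b$ was arbitrary this forces $f$ to be surjective, and combining injectivity with surjectivity shows $f$ is a bijective homomorphism, hence an isomorphism. The one point requiring care is recognizing that the single product identity at $\{b\},\{b^{-1}\}$ already pins down surjectivity, via the asymmetry that the empty set annihilates the quantale product while the fibre over the identity is never empty.
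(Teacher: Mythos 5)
Your proof is correct and its crux --- extracting surjectivity from $f^{-1}(\{b\})f^{-1}(\{b^{-1}\})=f^{-1}(\{1_H\})=\ker f\ni 1_G$ together with the fact that $\emptyset$ annihilates the product, and injectivity from $f^{-1}(\{1_H\})=\ker f=\{1_G\}$ --- is exactly the paper's argument. The only (inessential) difference is in the converse direction, where the paper verifies product preservation directly from surjectivity (choosing a preimage $k_0$ of $k$) and unit preservation from injectivity, whereas you identify $f^{-1}$ with the covariant powerset image of the inverse isomorphism and invoke the functoriality observation preceding the lemma; both are valid.
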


\begin{proof}
Let $f:G\to H$ be a homomorphism of discrete groups.
If $f^{-1}$ is a homomorphism of quantales and $h\in H$ we have
\[
f^{-1}(\{h\})f^{-1}(\{h^{-1}\})=f^{-1}(\{h\}\{h^{-1}\})=\ker f\;.
\]
Therefore $1\in f^{-1}(\{h\})f^{-1}(\{h^{-1}\})$, which shows that $h\in f(G)$ and thus $f$ is surjective.

Conversely, if $f$ is surjective and $g\in f^{-1}(\{h\}\{k\})$ there is $k_0\in G$ such that $f(k_0)=k$ and, setting $g_1=gk_0^{-1}$ and $g_2=k_0$, we have \[g=g_1g_2 \textrm{ and }f(g_1)=h\textrm{ and }f(g_2)=k\;,\]
whence $g\in f^{-1}(\{h\})f^{-1}(\{k\})$. This shows that
\[f^{-1}(\{h\}\{k\})\subset f^{-1}(\{h\})f^{-1}(\{k\})\;,\]
and thus $f^{-1}$ is a homomorphism of quantales.

Finally, the quantale unit is preserved by $f^{-1}$ if and only if $\{1\}=\ker f$, \ie, $f$ is injective.\qed
\end{proof}

This shows that in order to obtain a contravariant functor to the category of unital quantales from a category of \'etale groupoids whose morphisms are functors,  we should either enlarge the class of quantale homomorphisms or severely restrict the class of groupoid functors.

\subsection{Covering functors}\label{quantalfunctors}

The idea of restricting the class of groupoid functors has been adopted by Lawson and Lenz \cite{LL}, who have shown, in the context of topological \'etale groupoids, that the notion of covering functor (as in \citelist{\cite{BrownTopGpds}*{sec.\ 10.2} \cite{GabrielZisman}*{p.\ 139}}) is equivalent to that of a functor $(f_0,f_1):G\to H$ such that \[f_1^{-1}:\topology(H_1)\to\topology(G_1)\] is a homomorphism of unital quantales \cite[Lemma 2.20]{LL}. In this section we see that any homomorphism $h$ of unital quantales between inverse quantal frames equals an inverse image $f_1^*$ for a localic groupoid functor $(f_0,f_1)$ if and only if $h$ preserves finite meets, which gives us a way of extending the definition of covering functor to localic groupoids, as we now explain.

\begin{definition}
\begin{enumerate}
\item The category $\IQFrm$ is the subcategory of $\InvQuF$ with the same objects and whose homomorphisms also preserve finite meets.
\item We denote the dual category $\opp{\IQFrm}$ by $\IQLoc$.
\end{enumerate}
\end{definition}

\begin{theorem}
The assignment $Q\mapsto\groupoid(Q)$ from inverse quantal frames to \'etale groupoids extends to a functor
\[\groupoid:\IQLoc\to \Grpd\;.\]
\end{theorem}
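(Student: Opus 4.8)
The plan is to use the reconstruction $\opens(\groupoid(Q))=Q$ of \eqref{gpdiqfequiv} to turn a morphism of $\IQLoc$ into a pair of locale maps, and then to certify that this pair is an internal functor by checking the four hypotheses of Proposition \ref{laxfunlemma}. Unwinding the definition of the dual category, a morphism $Q\to R$ of $\IQLoc$ is a morphism $h\colon R\to Q$ of $\IQFrm$, that is, a homomorphism of unital involutive quantales that moreover preserves finite meets, hence is a homomorphism of frames between the underlying locales. Since the frames of $\groupoid(Q)_1$ and $\groupoid(R)_1$ are $Q$ and $R$, the homomorphism $h$ is the inverse image $f_1^*$ of a unique map $f_1\colon\groupoid(Q)_1\to\groupoid(R)_1$. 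Unitality and meet preservation give $h(b)\le h(e_R)=e_Q$ whenever $b\le e_R$, so $h$ restricts to a frame homomorphism $h_0\colon R_0\to Q_0$ of base locales (joins and meets below the unit coincide with those of the ambient frame), and $h_0=f_0^*$ for a map $f_0\colon\groupoid(Q)_0\to\groupoid(R)_0$. I would set $\groupoid(h)=(f_0,f_1)$.

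Next I would verify the four conditions of Proposition \ref{laxfunlemma} by dualizing them and substituting the inverse images of the structure maps of $\groupoid(Q)$ recorded in \cite{Re07}, namely $i^*(a)=a^*$, $u^*(a)=a\wedge e$, and $d^*(b)=1b$ for $b$ in the base locale. Condition \eqref{fun-i} becomes $h(a^*)=h(a)^*$, which is the involutivity of $h$; condition \eqref{fun-u} becomes $h(a)\wedge e_Q=h(a\wedge e_R)$, which follows from preservation of binary meets and $h(e_R)=e_Q$; condition \eqref{fun-d} becomes $1_Q\,h(b)=h(1_R\,b)$ for $b\in R_0$, which holds because $h$ is multiplicative and, being a frame homomorphism, preserves the top element, so that $h(1_R\,b)=h(1_R)\,h(b)=1_Q\,h(b)$; and the lax inequality \eqref{fun-m} follows from multiplicativity of $h$, since the product of $\opens(\groupoid(Q))$ is that of $Q$. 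Proposition \ref{laxfunlemma} then promotes \eqref{fun-m} to an equality, so $(f_0,f_1)$ is a functor; compatibility with $r$ needs no separate check because $r=d\circ i$. That $\groupoid$ preserves identities and composition is immediate, since passing from a frame homomorphism to its associated continuous map, and restricting to base locales, both respect composition (contravariantly on $\IQFrm$, hence covariantly on $\IQLoc$).

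The main obstacle is condition \eqref{fun-d}, as it is the only place where preservation of finite meets contributes something beyond what a unital involutive quantale homomorphism already supplies: it is used there solely through preservation of the top element $h(1_R)=1_Q$, i.e.\ the nullary meet. This is precisely the property that a generic morphism of $\InvQuF$ lacks, which is why only the meet-preserving homomorphisms give rise to honest groupoid functors, in agreement with the discussion preceding the statement. A secondary technical point to handle with care is the meaning of the inequality \eqref{fun-m} as an order between locale maps into $\groupoid(R)_1$, together with the fact that $f_1\tp f_1$ restricts to a map $\groupoid(Q)_2\to\groupoid(R)_2$; both are controlled by the compatibilities \eqref{fun-d} (and its involutive counterpart for $r$) already established, so that the hypotheses of Proposition \ref{laxfunlemma} are legitimately in force.
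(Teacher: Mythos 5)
Your proof follows essentially the same route as the paper's: you realize the $\IQLoc$-morphism as the pair $(f_0,f_1)$ via the explicit formulas for $i^*$, $u^*$, $d^*$, verify \eqref{fun-i}--\eqref{fun-d} together with the lax inequality \eqref{fun-m} (the content of the cited Lemma 5.13 of \cite{Re07}), and invoke Proposition \ref{laxfunlemma}. The one slip is in your closing commentary: binary meet preservation is also genuinely needed for \eqref{fun-u}, since a unital involutive quantale homomorphism between inverse quantal frames need not satisfy $h(a\wedge e)=h(a)\wedge e$ (e.g.\ the direct-image powerset homomorphism of a non-injective group homomorphism), so \eqref{fun-d} is not the only place where the $\IQFrm$ hypothesis contributes beyond $\InvQuF$.
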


\begin{proof}
Let $Q$ and $R$ be inverse quantal frames and let $f:Q\to R$ be a morphism in $\IQLoc$. Writing $G$ and $H$ for $\groupoid(Q)$ and $\groupoid(R)$, respectively, we have, as locales, $G_1=Q$, $H_1=R$, $G_0=Q_0$ and $H_0=R_0$, with the structure maps of $G$ given in terms of the quantale structure by, for all $a\in G_1$ and $b\in G_0$,
\begin{eqnarray*}
u^*(a)&=&a\wedge e\\
d^*(b)&=&b1\\
i^*(a)&=& a^*\;.
\end{eqnarray*}
For $H$ it is similar, and we shall use the same notation for the structure maps of $H$, without any indices.
As a candidate for a groupoid functor we set $f_1=f$, and $f_0$ is given by defining $f_0^*$ to be the restriction of $f^*$ to $H_0$. (This is well defined because $f^*$ is unital.) We note that since $f^*$ preserves the quantale involution we immediately obtain
\begin{eq}\label{funct-i}
f_1\circ i=i\circ f_1\;.
\end{eq}%
Now let us prove the following equalities:
\begin{eqarray}
f_1\circ u &=& u\circ f_0\;;\label{funct-u}\\
f_0\circ d &=& d\circ f_1\;.\label{funct-d}
\end{eqarray}%
We have: for all $a\in H_1$
\[
f_0^*(u^*(a))=f^*(a\wedge e)=f^*(a)\wedge e=u^*(f_1^*(a))\;,
\]
which proves Eq.\ (\ref{funct-u});
for all $b\in H_0$
\[
d^*(f_0^*(b))=d^*(f^*(b))=f^*(b)1=f^*(b1)=f_1^*(d^*(b))\;,
\]
which proves Eq.\ (\ref{funct-d}).
By \cite[Lemma 5.13]{Re07} we have
\[
(f^*\tp f^*)\circ m^*\le m^*\circ f^*\;,
\]
and thus by \ref{laxfunlemma} the pair $(f_0,f_1)$ is a groupoid functor. Finally, the assignment \[f\mapsto (f_0,f_1)\] is clearly functorial. \qed
\end{proof}

\begin{definition}
The category $\CGrpd$ is the subcategory of $\Grpd$ whose morphisms are the \emph{covering functors}, by which we mean the continuous functors $f:G\to H$ such that \[f_1^*:\opens(H)\to\opens(G)\] is a homomorphism of unital involutive quantales (\ie, $f_1$ is a morphism in $\IQLoc$).
\end{definition}

\begin{corollary}
The categories $\CGrpd$ and $\IQLoc$ are equivalent.
\end{corollary}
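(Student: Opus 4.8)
The plan is to promote the functor $\groupoid$ of the preceding theorem to an equivalence $\groupoid:\IQLoc\to\CGrpd$. First I would observe that $\groupoid$ already corestricts to $\CGrpd$: for a morphism $f:Q\to R$ of $\IQLoc$ the functor $\groupoid(f)=(f_0,f_1)$ has arrow component $f_1=f$, so $f_1^*=f^*$ is a homomorphism of unital involutive quantales preserving finite meets, which is exactly the condition for $(f_0,f_1)$ to be a covering functor. Hence $\groupoid$ yields a functor $\groupoid:\IQLoc\to\CGrpd$, and it will suffice to prove that this functor is full, faithful and essentially surjective.

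Faithfulness is immediate, since $\groupoid(f)$ has $f$ itself as its arrow component, so $f$ can be read off from $\groupoid(f)$. For essential surjectivity I would invoke the objects-only equivalence \eqref{gpdiqfequiv}: any \'etale groupoid $G$ satisfies $\groupoid(\opens(G))\cong G$, and since isomorphisms of groupoids are covering functors (the inverse image of an isomorphism is an isomorphism of inverse quantal frames, in particular a meet-preserving, involutive, unital homomorphism), this comparison lives in $\CGrpd$. Thus every object of $\CGrpd$ is, inside $\CGrpd$, isomorphic to some $\groupoid(Q)$.

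The crux is fullness. Given a covering functor $(f_0,f_1):\groupoid(Q)\to\groupoid(R)$, its arrow component is a locale map whose inverse image $f_1^*:R\to Q$ is, by the definition of $\CGrpd$, a morphism in $\IQFrm$; equivalently $f_1$ is a morphism $Q\to R$ of $\IQLoc$. I would then verify that $\groupoid(f_1)=(f_0,f_1)$. The two functors share the arrow component $f_1$ by construction, so everything reduces to checking that their object components agree, i.e.\ that $f_0^*$ equals the restriction $f_1^*|_{R_0}$ that the theorem uses to build $\groupoid(f_1)$. This is exactly where the hypothesis that $(f_0,f_1)$ is a genuine functor enters: writing the compatibility $f_1\circ u=u\circ f_0$ in terms of inverse images gives $u^*\circ f_1^*=f_0^*\circ u^*$, and evaluating at $b\in R_0=\downsegment(e_R)$, where $u^*(a)=a\wedge e$ restricts to the identity, yields $f_0^*(b)=f_1^*(b)\wedge e_Q$; since $f_1^*$ is unital and preserves finite meets one has $f_1^*(b)=f_1^*(b\wedge e_R)=f_1^*(b)\wedge e_Q\le e_Q$, so $f_0^*(b)=f_1^*(b)$. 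Hence $f_0^*=f_1^*|_{R_0}$ and $\groupoid(f_1)=(f_0,f_1)$.

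Finally, a full, faithful and essentially surjective functor is an equivalence, so $\groupoid:\IQLoc\to\CGrpd$ is an equivalence and the two categories are equivalent, with a quasi-inverse induced by $\opens$. The only step requiring real care is fullness, and within it the observation that a covering functor is determined by its arrow component, that is, that $f_0$ is forced by $f_1$ through the unit-compatibility equation; everything else is either formal or quoted from \eqref{gpdiqfequiv} and the preceding theorem.
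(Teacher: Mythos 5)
Your proof is correct. The paper takes a slightly different (though ultimately equivalent) route: instead of showing that $\groupoid:\IQLoc\to\CGrpd$ is fully faithful and essentially surjective, it exhibits an explicit adjoint equivalence, using the fact that $\opens$ also restricts to a functor $\CGrpd\to\IQLoc$ (immediate from the definition of covering functor), together with the identity $\opens(\groupoid(Q))=Q$ as unit and the canonical isomorphism $\iota_G:G\to\groupoid(\opens(G))$ (identity on arrow locales, codomain restriction of $u_!$ on object locales) as counit. The two arguments carry the same mathematical content in different packaging: your fullness computation --- that $f_0$ is forced by $f_1$ via $u\circ f_0=f_1\circ u$, using that $f_1^*$ is unital and meet-preserving so that $f_1^*(R_0)\subset\downsegment(e_Q)$ --- is exactly what is needed to check the naturality of $\iota$, a point the paper leaves implicit. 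Your version has the advantage of making that verification explicit and of needing only the one functor $\groupoid$; the paper's version produces a concrete quasi-inverse and the adjoint equivalence data directly. One small point worth flagging in your essential-surjectivity step, which you do address correctly: one must know that the isomorphism $\iota_G$ of $\Grpd$ is actually a morphism of $\CGrpd$, i.e.\ that inverse images of groupoid isomorphisms are unital involutive quantale homomorphisms; this is also implicitly required by the paper's proof.
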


\begin{proof}
For each inverse quantal frame $Q$ we have $\opens(\groupoid(Q))=Q$.
And for each \'etale groupoid $G$ we have $\groupoid(\opens(G))\cong G$, where a canonical isomorphism $\iota_G=(\iota_0,\iota_1):G\to\groupoid(\opens(G))$ in $\Grpd$ is such that $\iota_1$ is the identity on $G_1$ and
\[
\iota_0:G_0\to Q_0
\]
is the codomain restriction of $u_!:G_0\to G_1$. The two assignments $Q\mapsto\groupoid(Q)$ and $G\mapsto\opens(G)$, which extend to functors as we have seen, together with the two natural transformations
\begin{eqnarray*}
\ident&:& I\Rightarrow \opens\circ\groupoid\\
\iota&:& \groupoid\circ\opens\Rightarrow I\;,
\end{eqnarray*}
yield an adjoint equivalence of categories. \qed
\end{proof}
 
 \subsection{Lax homomorphisms}

For the sake of completeness let us take a very brief look at an alternative way of obtaining functoriality ``on the nose'', namely by enlarging the class of quantale homomorphisms.

We write $\IQLocLax$ for the extension of $\IQLoc$ whose objects are the inverse quantal frames and whose morphisms \[f:R\to Q\] are the maps of locales such that
\begin{eqnarray*}
f^*(a)f^*(b)&\le& f^*(ab)\ \ \ \ \textrm{for all }a,b\in Q\;,\\
f^*(a^*)&=& f^*(a)^*\;,\\
e_R&\le& h(e_Q)\;.
\end{eqnarray*}%

\begin{theorem}
The assignment $G\mapsto\opens(G)$ extends to a faithful functor
\[\opens:\Grpd\to\IQLocLax\;.\]
\end{theorem}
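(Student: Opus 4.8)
The plan is to verify the three functor axioms: that $\opens$ sends each étale groupoid to an inverse quantal frame (already established earlier), that it sends each continuous functor $(f_0,f_1):G\to H$ to a morphism in $\IQLocLax$, and that this assignment respects identities and composition. The crux is the middle point, so I would begin by fixing a functor $(f_0,f_1):G\to H$ in $\Grpd$ and taking $\opens(f_0,f_1)$ to be the locale map $f:\opens(H)\to\opens(G)$ whose inverse image is $f_1^*:\opens(H)\to\opens(G)$ (note the direction: $\Grpd$ morphisms go $G\to H$ while $\IQLocLax$ is built on the dual, so the functor lands $\opens(H)\to\opens(G)$, matching the stated source and target). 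I would then check that $f_1^*$ satisfies the three defining inequalities/equations of an $\IQLocLax$-morphism.

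First I would establish the involution condition $f_1^*(a^*)=f_1^*(a)^*$. Since the quantale involution on $\opens(G)$ is $i_!$ and the functor axiom \eqref{fun-i} gives $f_1\circ i=i\circ f_1$, passing to direct images (recall $i$ is an isomorphism, so $i_!=(i^*)^{-1}$) yields the compatibility of $f_1^*$ with $(-)^*$; concretely $f_1^*\circ i^* = i^*\circ f_1^*$, which is exactly involution-preservation once we identify $a^*$ with $i^*(a)$ on each side. Next, for the lax multiplicativity $f_1^*(a)f_1^*(b)\le f_1^*(ab)$, the natural route is to dualize the functoriality of $m$. Because $(f_0,f_1)$ is a genuine functor we have $m\circ(f_1\tp f_1)=f_1\circ m$ (the equality guaranteed by Proposition~\ref{laxfunlemma}), and translating this equality of locale maps into a relation between the inverse-image frame homomorphisms and then into the quantale products on the associated opens produces precisely the inequality $(f_1^*\tp f_1^*)\circ m^*\le m^*\circ f_1^*$, which unwinds to $f_1^*(a)f_1^*(b)\le f_1^*(ab)$. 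This is the step I expect to be the main obstacle, since it requires carefully relating the multiplication $\mu=m_!\circ(\text{surjection})$ on $\opens(G)$ to the inverse image $f_1^*$, using the adjunction between $m^*$ and $m_!$ and the Frobenius-type relations that govern open maps; the inequality (rather than equality) reflects the failure of $f_1^*$ to commute with the direct images $m_!$ on the nose.

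For the unit condition $e_{\opens(G)}\le f_1^*(e_{\opens(H)})$, I would use that $e=u_!(1)$ in each quantale together with the functor axioms \eqref{fun-u} and \eqref{fun-d}. From $f_1\circ u=u\circ f_0$ I get a relation between $u^*$, $f_1^*$ and $f_0^*$; since $e_{\opens(H)}=u_!(1)$ corresponds under $u^*$ to the top element, and $f_1^*$ need not preserve the direct image $u_!$ exactly, one obtains an inequality $e\le f_1^*(e)$ rather than equality (this mirrors the ``lax'' relaxation $e_R\le h(e_Q)$ in the definition). I would make this precise by computing $u^*(f_1^*(e_{\opens(H)}))$ and comparing it with $f_0^*(u^*(e_{\opens(H)}))=f_0^*(1)=1$, then transporting back across the order-isomorphism $u_!:G_0\cong\downsegment(e)$.

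Finally, functoriality is routine: the identity functor on $G$ induces $\ident_{\opens(G)}$, and given composable functors $G\xrightarrow{(f_0,f_1)}H\xrightarrow{(g_0,g_1)}K$ one has $(g\circ f)_1=g_1\circ f_1$, hence $(g_1\circ f_1)^*=f_1^*\circ g_1^*$, which is exactly the composite in $\IQLocLax$ (the order reversal being consistent with passing to the dual). Faithfulness follows because a functor $(f_0,f_1)$ is determined by $f_1$ alone: $f_0$ is recovered as the restriction of $f_1^*$ to the base locale via $f_0^*=f_1^*|_{H_0}$, exactly as in the proof of the preceding theorem, so distinct functors give distinct inverse-image maps. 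I would close by remarking that the only genuinely non-formal content is the pair of inequalities for multiplication and unit, both of which are instances of the ``lax'' behaviour already isolated in \cite[Lemma 5.13]{Re07}.
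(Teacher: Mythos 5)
Your proposal is correct and follows essentially the same route as the paper: lax multiplicativity is deferred to \cite[Lemma 5.13]{Re07}, involution-preservation comes from functors preserving inverses via $i_!=i^*$, and the unit inequality $e_{\opens(G)}\le f_1^*(e_{\opens(H)})$ is extracted from the axiom $f_1\circ u=u\circ f_0$ together with the adjunction $(u_G)_!\dashv u_G^*$, exactly as in the paper's argument. The one blemish is the opening sentence, where you describe $\opens(f_0,f_1)$ as ``the locale map $\opens(H)\to\opens(G)$ whose inverse image is $f_1^*:\opens(H)\to\opens(G)$'' --- a map and its inverse image cannot point the same way, and for the functor to be covariant the $\IQLocLax$-morphism must go $\opens(G)\to\opens(H)$ with \emph{frame-level} inverse image $f_1^*:\opens(H)\to\opens(G)$; since every inequality you actually verify afterwards is stated for the correct direction, this is a notational slip rather than a gap.
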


\begin{proof}
Let $f:G\to H$ be a morphism of $\Grpd$, and let $Q=\opens(G)$ and $R=\opens(H)$. The assignment $f\mapsto f_1^*$ is of course functorial and faithful, so we only have to verify that $f_1^*:R\to Q$ satisfies the three above conditions. The first is a consequence of \cite[Lemma 5.13]{Re07}, and the second is an immediate consequence of the fact that functors preserve inverses. The third also holds, as we now explain. The axiom \[u_H\circ f_0= f_1\circ u_G\] of groupoid functors implies
\[
f_0^*\circ u^*_H\le u^*_G\circ f_1^*
\]
which, by adjointness, gives us
\[
(u_G)_!\circ f_0^*\circ u^*_H\le f_1^*\;.
\]
Composing with $(u_H)_!$ we obtain
\[
(u_G)_!\circ f_0^*\circ u^*_H\circ (u_H)_!\le f_1^*\circ (u_H)_!\;,
\]
and this, using the unit of the adjunction $\ident\le u^*_H\circ (u_H)_!$, implies
\[
(u_G)_!\circ f_0^*\le f_1^*\circ (u_H)_!\;.
\]
Hence,
\[e_Q=(u_G)_!(1_{G_0})= (u_G)_!(f_0^*(1_{H_0}))\le f_1^*( (u_H)_!(1_{H_0}))=f_1^*(e_R)\;. \qed\]
\end{proof}

\section{Groupoid actions}\label{gpdactions}

Let us study some constructions related to orbits of groupoid actions, in the language of quantale modules.

\subsection{Orbits}

If $G$ is an \'etale groupoid and $X$ is a left $G$-locale, we can construct the \emph{orbit locale} of the action as the coequalizer in $\Loc$
\[
\xymatrix{
G_1\tp_{G_0} X\ar@<-1.2ex>[rr]_-{\pi_2}\ar@<1.2ex>[rr]^-{\mathfrak a}&&X\ar[r]&X/G\;.
}
\]
The locale points of $X/G$ can be regarded as being the orbits of the action of $G$ on $X$. 

\begin{definition}
We refer to $X/G$ as the \emph{quotient of $X$ by $G$}. For a right $G$-locale the corresponding quotient is denoted by $G\backslash X$.
\end{definition}

There is a simple description of these quotients in terms of $\opens(G)$-modules. We explain this for left actions only, as for right actions everything is similar.

\begin{definition}\label{invariant}
Let $G$ be an \'etale groupoid with quantale $Q=\opens(G)$, and $X$ a left $G$-locale. An \emph{element} $x\in X$ is \emph{invariant} if the following equivalent conditions hold (for $X$ regarded as a $Q$-module):
\begin{enumerate}
\item For all $a\in Q$ we have $ax\le x$;
\item For all $s\in Q_\ipi$ we have $sx\le x$;
\item $1x\le x$;
\item $1x=x$.
\end{enumerate}
\end{definition}

\begin{theorem}
Let $G$ be an \'etale groupoid and $X$ a left $G$-locale. The quotient $X/G$
coincides with the set of invariant elements of the action.
\end{theorem}

\begin{proof}
First we remark that the invariant elements form an obvious subframe $F\subset X$, hence defining a quotient locale as required. It remains to be shown that the following diagram is an equalizer in the category of sets, where $\iota$ is the frame inclusion:
\[
\xymatrix{
F\ar[r]^-\iota&X\ar@<-1.2ex>[rr]_-{\pi_2^*}\ar@<1.2ex>[rr]^-{\mathfrak a^*}&&G_1\tp_{G_0} X\;.
}
\]
In other words, we need to show that $x$ is invariant if and only if
\begin{eq}
\pi_2^*(x)=\mathfrak a^*(x)\;.\label{invequiv}
\end{eq}%
Let us assume that Eq.\ (\ref{invequiv}) holds. Using the co-unit of the adjunction $\mathfrak a_!\dashv\mathfrak a^*$ we conclude that $x$ is invariant:
\[
1x=\mathfrak a_!(1\tp x)=\mathfrak a_!(\pi_2^*(x))
=\mathfrak a_!(\mathfrak a^*(x))\le x\;.
\]
Conversely, let us assume that $x$ is invariant. By Eq.\ (\ref{rightadjoint}), the condition $1x\le x$ immediately implies that $1\tp x\le\mathfrak a^*(x)$. And, by Eq.\ (\ref{alphastareq}), we have, writing $Q=\opens(G)$,
\[\mathfrak a^*(x) = \V_{s\in Q_\ipi} s\tp s^*x\le\V_{s\in Q_\ipi} s\tp x=1\tp x\;.\]
Hence, Eq.\ (\ref{invequiv}) holds. \qed
\end{proof}

We remark that, although this is not needed in what follows, the idea that the orbits must be certain ``subspaces'' can be explicitly conveyed by first observing that, as a subframe, $X/G$ is in fact closed under arbitrary meets in $X$, which means that it also defines a quotient of $X$ in $\SL$ \cite{JT}. This does not correspond to a sublocale of $X$ because the quotient is not taken in $\Frm$. However,
by freely adjoining finite meets to $X$ we obtain the \emph{lower powerlocale} $\lpl X$  (one of several localic notions of ``powerspace of $X$''), whose points can be identified (in an arbitrary topos) with the ``weakly closed sublocales of $X$ with open domain'' \cite{BuFu96} (and coincide, in classical set theory, with the closed sublocales of $X$ --- see \cite{RV}).
Hence, the sup-lattice quotient $X\to X/G$ extends uniquely to a frame quotient $\lpl X\to X/G$, hence depicting $X/G$ as a sublocale of $\lpl X$, and allowing us to view the orbits of the action as being sublocales of $X$.

\subsection{Diagonal actions}

Let $G$ be an \'etale groupoid with quantale $Q=\opens(G)$. Given right and left $G$-locales $(X,p,\mathfrak a)$ and $(Y,q,\mathfrak b)$, we can define on the pullback $X\tp_{G_0} Y$ of $p$ and $q$ (which equals $X\tp_{Q_0} Y$) the diagonal action which, in point-set notation, would be given by the formula
\[g\act(x,y)=(x\act g^{-1},g\act y)\;.\]
Module-theoretically this goes as follows:

\begin{theorem}
Let $G$ be an \'etale groupoid with quantale $Q=\opens(G)$, and let $X$ and $Y$ be a right $G$-locale and a left $G$-locale with anchor maps $p$ and $q$, respectively. The following conditions hold:
\begin{enumerate}
\item (\emph{Diagonal action}.) A left quantale action of $Q$ on $X\tp_{Q_0} Y$ is defined, for all $x\in X$, $y\in Y$ and $s\in Q_\ipi$, by the condition
\begin{eq}
s\act(x\tp y)=xs^*\tp sy\;.\label{action}
\end{eq}%
\item This action makes $X\tp_{Q_0} Y$ a left $Q$-locale.
\end{enumerate}
\end{theorem}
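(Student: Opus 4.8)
The plan is to exploit the isomorphism $Q\cong\lcc(Q_\ipi)$ of (\ref{lcc}). Since a left $Q$-module structure on a sup-lattice $Z$ is the same thing as a unital homomorphism from $Q$ into the quantale $\End(Z)$ of sup-lattice endomorphisms of $Z$ (composition as product, pointwise joins), and since $\lcc(Q_\ipi)$ freely adjoins joins of compatible families to $Q_\ipi$, it suffices to produce an \emph{inverse-semigroup} homomorphism $\lambda\colon Q_\ipi\to\End(Z)$, with $Z=X\tp_{Q_0}Y$, that sends $e$ to $\ident$ and preserves joins of compatible families. The extension of $\lambda$ along (\ref{lcc}) then yields part (1), and for part (2) I would separately verify the anchor condition (\ref{Qlocaleproperty}).

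First I would fix $s\in Q_\ipi$ and define $\lambda_s$ on generators by $\lambda_s(x\tp y)=xs^*\tp sy$. Because right multiplication by $s^*$ on $X$ and left multiplication by $s$ on $Y$ are sup-lattice homomorphisms, the assignment $(x,y)\mapsto xs^*\tp sy$ preserves joins in each variable; the only thing to check is that it is balanced over $Q_0$, so that it descends to the pullback $Z$. For $b\in Q_0$ this amounts to $x(bs^*)\tp sy=xs^*\tp(sb)y$, which follows from the inverse-semigroup identity $sb=(sbs^*)s$ together with $s^*sbs^*=bs^*$ (idempotents of $Q_\ipi$ commute and $s^*ss^*=s^*$), after sliding the idempotent $sbs^*\in Q_0$ across the tensor. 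A direct computation on generators then gives $\lambda_s\circ\lambda_t=\lambda_{st}$ and $\lambda_e=\ident$ (the latter using that the actions on $X$ and $Y$ are unital), so $\lambda$ is a unit-preserving homomorphism of semigroups.

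The heart of the argument is the preservation of compatible joins, and this is the step I expect to be the main obstacle. Given a compatible family $S\subseteq Q_\ipi$, its join $s=\V S$ again lies in $Q_\ipi$ (as $Q_\ipi$ is an infinitely distributive inverse semigroup), and each $t\in S$ satisfies $t=d_t s$ with $d_t=tt^*\in Q_0$; writing $D=\V_{t\in S}d_t\le e$ one has $Ds=s$. On one hand $\lambda_t(x\tp y)=xs^*d_t\tp sy$, so $\V_{t\in S}\lambda_t(x\tp y)=xs^*D\tp sy$; on the other hand, since $Ds=s$, I can insert $D$ to get $\lambda_s(x\tp y)=xs^*\tp sy=xs^*\tp D(sy)=xs^*D\tp sy$. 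Hence $\lambda_{\V S}=\V_{t\in S}\lambda_t$; the essential point is that compatibility forces every $t$ below the single partial unit $s$, so that the idempotent $D$ absorbs all the off-diagonal tensor terms. Extending $\lambda$ along (\ref{lcc}) to a sup-lattice homomorphism $\tilde\lambda\colon Q\to\End(Z)$, unitality is immediate and multiplicativity follows from $a=\V_{s\le a}s$ together with the fact that composition in $\End(Z)$ and multiplication in $Q$ both distribute over joins; this proves part (1), with the action given by (\ref{action}).

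For part (2) I would verify the anchor condition $bz=b1_Z\wedge z$ for $b\in Q_0$. Both sides preserve joins in $z$ — the left by the action, the right by the frame distributive law in the pullback $Z$ — so it suffices to treat $z=x\tp y$. Here $b(x\tp y)=xb\tp by$, while $b1_Z=b(1_X\tp 1_Y)=1_Xb\tp b1_Y$. Computing the meet in $Z$ by the componentwise formula for the pullback of locales gives $b1_Z\wedge(x\tp y)=(1_Xb\wedge x)\tp(b1_Y\wedge y)$, and the right $Q$-locale anchor condition on $X$ gives $1_Xb\wedge x=xb$ while (\ref{Qlocaleproperty}) on $Y$ gives $b1_Y\wedge y=by$; thus the meet equals $xb\tp by=b(x\tp y)$, as required. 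This shows $Z$ is a left $Q$-locale.
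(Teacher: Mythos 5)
Your proof is correct and follows essentially the same route as the paper's: define the action of each partial unit on generators, check middle-linearity over $Q_0$ to descend to $X\tp_{Q_0}Y$, extend to $Q$ via the isomorphism $Q\cong\lcc(Q_\ipi)$ by verifying preservation of compatible joins, and check the anchor condition by the same meet computation. The only differences are cosmetic: you package the action as a homomorphism into $\End(Z)$ and handle the compatible join via $t=d_t\V S$, where the paper computes the double join $\V_{s,t}xs^*\tp ty$ directly.
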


\begin{proof}
For each $s\in Q_\ipi$, let the mapping
\[f_s:X\oplus Y\to X\tp_{Q_0} Y\]
be defined by
\[f_s(x,y)=xs^*\otimes sy\;.\]
This clearly preserves joins in each variable separately. And, for each $b\in Q_0$, the following middle-linearity condition is satisfied:
\begin{eqnarray*}
f_s(xb,y)&=&xbs^*\otimes sy = xbs^*ss^*\otimes sy=xs^*sbs^*\otimes sy\\
&=&xs^*\otimes sbs^*sy=xs^*\otimes ss^*sby=xs^*\otimes sby\\
&=&f_s(x,by)\;.
\end{eqnarray*}
Hence, $f_s$ factors uniquely through the sup-lattice homomorphism given by
\[x\otimes y\mapsto xs^*\otimes sy\;,\]
and thus the semigroup $Q_\ipi$ acts by endomorphisms on $X\tp_{Q_0} Y$ (the associativity of the action is immediate). Now recall the isomorphism $Q\cong\lcc(Q_\ipi)$ of (\ref{lcc}) --- the right hand side is the frame of \emph{compatible ideals of $Q_\ipi$}, which are the downwards-closed subsets of $Q_\ipi$ that are closed under the formation of joins of compatible subsets. In order to show that the action of $Q_\ipi$ extends to the required action of $Q$ it suffices to show that the semigroup action respects such joins. Let then $Z\subset Q_\ipi$ be compatible, \ie, a subset such that for all $s,t\in Z$ we have $st^*\le e$ and $s^* t\le e$. Then $\V Z\in Q_\ipi$ and, for all $s,t\in Z$, $x\in X$ and $y\in Y$, we have
\[xs^*\tp ty=xs^*ss^*\tp ty=xs^*\tp ss^*t y\le xs^*\tp sy\;,\]
and thus we obtain
\begin{eqnarray*}
\left(\V Z\right)\act(x\tp y) &=&
x\left(\V Z\right)^*\tp \left(\V Z\right)y=\V_{s,t\in Z}xs^*\tp ty\\
&=& \V_{s\in Z} xs^*\tp sx=\V \left(Z\act(x\tp y)\right)\;.
\end{eqnarray*}
This proves that $X\tp_{Q_0} Y$ is a left $Q$-module with the action defined by Eq.\ (\ref{action}). And it is a $Q$-locale because the anchor condition holds: for all $b\in Q_0$ and $\xi=\V_i x_i\tp y_i\in X\tp_{Q_0} Y$ we have
\begin{eqnarray*}b\act\xi&=&\V_i x_i b\tp b y_i =\V_i (1b\wedge x_i)\tp(b1\wedge y_i)
=\V_i(1b\tp b1)\wedge(x_i\wedge y_i)\\
&=&b\act(1\tp 1)\wedge\xi\;. \qed
\end{eqnarray*}
\end{proof}

\subsection{Tensor products}

Let $G$ be an \'etale groupoid. Given right and left $G$-locales $(X,p,\mathfrak a)$ and $(Y,q,\mathfrak b)$, a \emph{tensor product over $G$} can be defined as a coequalizer in $\Loc$ (\cf\ \cite{Moer87,Moer90}):
\begin{eq}\label{tensorcoeq}
\xymatrix{
X\tp_{G_0} G_1\tp_{G_0} Y\ar@<-1.2ex>[rr]_-{\langle\pi_1,\mathfrak b\circ \pi_{23}\rangle}\ar@<1.2ex>[rr]^-{\langle\mathfrak a\circ\pi_{12},\pi_3\rangle}&&X\tp_{G_0} Y\ar[r]&X\tp_G Y\;.
}
\end{eq}%
Our aim now is to show that this tensor product coincides with the ``ring-theoretic'' tensor product of $\opens(G)$-modules, and our first step will be to show module-theoretically that $X\tp_G Y$ can be given an equivalent definition as the quotient $(X\tp_{G_0} Y)/G$ by the diagonal action (\cf\ \cite{Mr99}).

\begin{lemma}\label{Gtenseqquot}
Let $G$ be an \'etale groupoid, and $(X,p,\mathfrak a)$ and $(Y,q,\mathfrak b)$ a right and a left $G$-locale, respectively. Then
\[X\tp_G Y=(X\tp_{G_0} Y)/G\;.\]
\end{lemma}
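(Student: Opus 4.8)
The plan is to realise both $X\tp_G Y$ and $(X\tp_{G_0}Y)/G$ as one and the same subframe of $W:=X\tp_{G_0}Y=X\tp_{Q_0}Y$, where $Q=\opens(G)$. Since a coequalizer in $\Loc$ is an equalizer in $\Frm$, the tensor product defined by (\ref{tensorcoeq}) has as underlying frame the equalizer $\{\xi\in W\st\alpha^*(\xi)=\beta^*(\xi)\}$, where I write $\alpha=\langle\mathfrak a\circ\pi_{12},\pi_3\rangle$ and $\beta=\langle\pi_1,\mathfrak b\circ\pi_{23}\rangle$ for the two parallel maps. On the other side, I equip $W$ with the diagonal action of Eq.\ (\ref{action}); by the theorem identifying the quotient of a $G$-locale with its frame of invariant elements (see Definition \ref{invariant}), the quotient $(X\tp_{G_0}Y)/G$ is the subframe $\{\xi\in W\st 1\act\xi=\xi\}$. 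It therefore suffices to show, for every $\xi\in W$, that $\alpha^*(\xi)=\beta^*(\xi)$ if and only if $\xi$ is invariant.

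First I would record how the two maps act on $W':=X\tp_{G_0}G_1\tp_{G_0}Y$. As $\alpha$ and $\beta$ arise from the open action maps $\mathfrak a$ and $\mathfrak b$ by forming a pullback over $G_0$ with an identity, their direct images are the module actions in the appropriate variable, $\alpha_!(x\tp a\tp y)=xa\tp y$ and $\beta_!(x\tp a\tp y)=x\tp ay$, while Eqs.\ (\ref{alphastareqright}) and (\ref{alphastareq}) lift to $\alpha^*(x\tp y)=\V_{s\in Q_\ipi}xs^*\tp s\tp y$ and $\beta^*(x\tp y)=\V_{s\in Q_\ipi}x\tp s\tp s^*y$. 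The crucial computation is that the two mixed composites both return the diagonal action of the top element: for $\xi=\V_i x_i\tp y_i$ one gets $\beta_!\alpha^*(\xi)=\V_{i,s}x_is^*\tp sy_i$, which is exactly $1\act\xi$, and $\alpha_!\beta^*(\xi)=\V_{i,s}x_is\tp s^*y_i$, which becomes the same expression after the reindexing $s\mapsto s^*$ (legitimate because $Q_\ipi$ is closed under involution). Hence $\alpha_!\beta^*=1\act(-)=\beta_!\alpha^*$ as operators on $W$.

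Granting these identities, the equivalence follows formally from the adjunctions $\alpha_!\dashv\alpha^*$ and $\beta_!\dashv\beta^*$. If $\xi$ is invariant then $\beta_!\alpha^*(\xi)=1\act\xi=\xi$, so by adjointness $\alpha^*(\xi)\le\beta^*(\xi)$, and symmetrically $\alpha_!\beta^*(\xi)=\xi$ gives $\beta^*(\xi)\le\alpha^*(\xi)$; thus $\alpha^*(\xi)=\beta^*(\xi)$. Conversely, if $\alpha^*(\xi)=\beta^*(\xi)$ then the counit of $\alpha_!\dashv\alpha^*$ gives $1\act\xi=\alpha_!\beta^*(\xi)=\alpha_!\alpha^*(\xi)\le\xi$, so $\xi$ is invariant. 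The two subframes of $W$ coincide, and since each quotient map out of $W$ is determined by the corresponding subframe, we conclude $X\tp_G Y=(X\tp_{G_0}Y)/G$.

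I expect the main obstacle to be the rigorous justification of the inverse-image formulas for $\alpha$ and $\beta$ --- concretely, that forming a pullback of an open map over $G_0$ with an identity commutes with the passage to direct and inverse images, so that $\alpha^*=\mathfrak a^*\tp_{G_0}\ident_Y$ and dually for $\beta$. This is where the iterated pullbacks over $G_0$, the matching of anchor maps, and the stability of open maps together with formulas (\ref{alphastareq})--(\ref{alphastareqright}) under pullback must be treated carefully; the point-set picture is transparent but the localic bookkeeping is the real work. A more conceptual route that avoids computing inverse images is to build the localic shear map $\phi\colon G_1\tp_{G_0}W\to W'$ corresponding to $(g,x,y)\mapsto(xg^{-1},g,y)$, verify from the action axioms that it is invertible, and observe that $\alpha\circ\phi=\pi_2$ while $\beta\circ\phi$ is the diagonal action; as precomposition with an isomorphism leaves a coequalizer unchanged, this identifies the two coequalizers at once. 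Constructing $\phi$ as a genuine localic isomorphism is, however, of comparable difficulty to the computation above.
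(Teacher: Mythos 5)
Your proof is correct and follows essentially the same route as the paper's: both identify the coequalizer with the equalizer subframe of $X\tp_{G_0}Y$ computed via Eqs.\ (\ref{alphastareq})--(\ref{alphastareqright}) and then match that equalizer condition against invariance for the diagonal action. Your packaging of the final step through the single operator identity $\alpha_!\beta^*=\beta_!\alpha^*=1\act(-)$ together with the adjunctions is a slightly cleaner rendering of the paper's explicit chain of (in)equalities, but the underlying computation is the same.
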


\begin{proof}
The coequalizer $X\tp_G Y$ can be concretely identified with the subframe of $X\tp_{G_0} Y$ consisting of the elements $\xi$ such that
\[
[\pi_{12}^*\circ\mathfrak a^*,\pi_3^*](\xi)=[\pi_1^*,\pi_{23}^*\circ\mathfrak b^*](\xi)\;.
\]
Using Eqs.\ (\ref{alphastareq}) and (\ref{alphastareqright}), respectively for $\mathfrak b^*$ and $\mathfrak a^*$, this equality is equivalent, letting $\xi=\V_i x_i\tp y_i$ and writing $Q=\opens(G)$, to
\begin{eq}\label{equalizer}
\V_{i}\V_{s\in Q_\ipi} x_is^*\tp s\tp y_i=\V_i\V_{s\in Q_\ipi} x_i\tp s\tp s^* y_i\;.
\end{eq}%
In order to conclude the proof we show that $\xi$ satisfies this equality if and only if it is invariant with respect to the diagonal action. Let us assume that Eq.\ (\ref{equalizer}) holds. Then $\xi$ is invariant:
\begin{eqnarray*}
1\xi&=&\V_{i,s} x_i s^*\tp s y_i = \mathfrak a_!\tp\ident\left(\V_{i,s} x_i\tp s^*\tp sy_i\right)\\
&=&\mathfrak a_!\otimes \ident\left(\V_{i,s} x_i s\tp s^*\tp y_i\right)
=\V_{i,s} x_i ss^*\tp y_i=\xi\;.
\end{eqnarray*}
Conversely, assuming that $\xi$ is invariant, Eq.\ (\ref{equalizer}) holds:
\[\begin{array}{rcll}
\V_{i,s} x_i\tp s\tp s^* y_i &\le& \mathfrak a^*\tp\ident\left(\V_{i,s} x_i s\tp s^*y_i\right)
&\textrm{[by Eq.\ (\ref{rightadjointright})]}\\
&\le&\mathfrak a^*\tp\ident\left(\V_i x_i\tp y_i\right)&(s\act\xi\le\xi)\\
&=&\V_{i,s} x_i s^*\tp s\tp y_i & \textrm{[by Eq.\ (\ref{alphastareqright})]}\\
&\le&\ident\tp\mathfrak b^*\left(\V_{i,s} x_i s^*\tp s y_i\right)&\textrm{[by Eq.\ (\ref{rightadjoint})]}\\
&\le&\ident\tp\mathfrak b^*\left(\V_i x_i\tp y_i\right)&(s\act\xi\le \xi)\\
&=&\V_{i,s} x_i\tp s\tp s^* y_i &\textrm{[by Eq.\ (\ref{alphastareq})]}\;. \qed
\end{array}\]
\end{proof}

\begin{theorem}\label{thmtp}
Let $G$ be an \'etale groupoid, and let $X$ and $Y$ be a right $G$-locale and a left $G$-locale as in the previous lemma. Then,
\[X\tp_G Y = X\tp_{\opens(G)} Y\;.\]
\end{theorem}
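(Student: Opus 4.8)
The plan is to identify both sides as quotients of the base tensor product $X\tp_{Q_0}Y$ (where $Q=\opens(G)$ and $Q_0$ is the base locale) and to show that the two resulting sup-lattice congruences coincide. Recall first that the module tensor product $X\tp_Q Y$ is the coequalizer in $\SL$ of the pair $X\tp Q\tp Y\rightrightarrows X\tp Y$ given by the two actions, \ie\ the universal recipient of a $Q$-balanced bimorphism. Since every $Q$-balanced bimorphism is in particular $Q_0$-balanced (as $Q_0\subseteq Q$), there is a canonical surjection $p\colon X\tp_{Q_0}Y\twoheadrightarrow X\tp_Q Y$; moreover, because $Q\cong\lcc(Q_\ipi)$ every element of $Q$ is a join of partial units and the balancing relation is closed under joins in the acting variable, so $X\tp_Q Y$ is exactly the quotient of $X\tp_{Q_0}Y$ by the further relations $xs\tp y=x\tp sy$ for $s\in Q_\ipi$. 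On the other side, Lemma \ref{Gtenseqquot} together with the description of quotients as invariant elements identifies $X\tp_G Y$ with the sub-sup-lattice (indeed subframe) of $X\tp_{Q_0}Y$ fixed by the closure operator $j(\xi)=1\act\xi$, the diagonal action of the top element, the quotient map being $\xi\mapsto j(\xi)$.

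First I would show that $p$ factors through $j$. Since $j$ is inflationary it suffices to check $p(j(\xi))\le p(\xi)$, and as $p$ and $j$ preserve joins this reduces to simple tensors: in $X\tp_Q Y$ one has, for $s\in Q_\ipi$, $p(s\act(x\tp y))=p(xs^*\tp sy)=p(x\tp s^*sy)=p(x(s^*s)\tp y)\le p(x\tp y)$, using $Q$-balancing with $a=s^*$, then $Q_0$-balancing (as $s^*s\in Q_0$), and finally $x(s^*s)\le x$. Hence $p(j(\xi))=\V_{s}p(s\act\xi)\le p(\xi)$, so $p$ descends to a surjection $\bar p\colon X\tp_G Y\to X\tp_Q Y$.

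Conversely, I would produce an inverse by showing that the canonical bimorphism $\beta=j\circ({-}\tp{-})\colon X\times Y\to X\tp_G Y$ is $Q$-balanced, \ie\ that
\[
j(xs\tp y)=j(x\tp sy)\qquad(s\in Q_\ipi,\ x\in X,\ y\in Y)\;.
\]
By the universal property of $X\tp_Q Y$ this yields a map $\gamma\colon X\tp_Q Y\to X\tp_G Y$, and checking on the generators $x\tp y$ that $\gamma\circ\bar p$ and $\bar p\circ\gamma$ are identities gives the asserted equality $X\tp_G Y=X\tp_Q Y$.

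The main obstacle is precisely this balancing identity $j(xs\tp y)=j(x\tp sy)$: unlike the previous step it does \emph{not} hold before saturation (no single simple tensor dominates), so it must be proved at the level of the full joins. Expanding $1=\V_{t\in Q_\ipi}t$ and using the diagonal-action formula turns it into the equality of joins $\V_{t}x(st^*)\tp ty=\V_{t}xt^*\tp tsy$, which I would establish by reindexing the two families via $t\mapsto ts$ and $t\mapsto ts^*$ and absorbing the resulting idempotents $s^*s,\ ss^*\in Q_0$ across the tensor through $Q_0$-balancing, together with the inverse-semigroup identities in $Q_\ipi$. This is exactly the point at which the compatibility of the right $G$-action on $X$ and the left $G$-action on $Y$ over the common base is used; it is the analogue, for inverse semigroups, of the clean reindexing $t\mapsto ts$ available when $G$ is a group and $s$ is invertible, the idempotents encoding the partiality of the bisections.
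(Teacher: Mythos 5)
Your proposal is correct, and it is completable, but it takes a somewhat different route from the paper's. Both arguments start from the same two ingredients --- $X\tp_Q Y$ is the quotient of $X\tp_{Q_0}Y$ by the relations $xs\tp y=x\tp sy$ ($s\in Q_\ipi$), and Lemma \ref{Gtenseqquot} identifies $X\tp_G Y$ with the invariant elements of the diagonal action --- but where you compare the two \emph{quotient maps} by building mutually inverse comparison morphisms, the paper compares the two \emph{subsets of closed elements} of $X\tp_{Q_0}Y$ directly: an element $\xi$ is invariant if and only if it is saturated in the sense that $xs\tp y\le\xi\iff x\tp sy\le\xi$, and both implications are one-line computations on simple tensors sitting below $\xi$ (e.g.\ $x\tp sy=xss^*\tp sy=s\act(xs\tp y)\le s\act\xi\le\xi$). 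The payoff of the paper's choice is that it entirely sidesteps the join-level balancing identity $j(xs\tp y)=j(x\tp sy)$, which you correctly single out as the crux of your version and which is genuinely the hardest step there. That identity is nevertheless true and your sketched method for it works: for each $t\in Q_\ipi$ one has
\[
xst^*\tp ty \;=\; x\,(st^*)(ts^*st^*)\tp ty \;=\; xst^*\tp (ts^*st^*t)y \;=\; xst^*\tp ts^*sy \;=\; x(ts^*)^*\tp (ts^*)sy\;,
\]
using that $ts^*st^*=t(s^*s)t^*$ is an idempotent of $Q_0$ which can be moved across the $Q_0$-balanced tensor, so each term of $j(xs\tp y)$ coincides with the term of $j(x\tp sy)$ indexed by $u=ts^*$, and symmetrically. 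So your argument closes, at the cost of this extra inverse-semigroup bookkeeping; the paper's closed-elements argument buys a shorter proof, while yours makes the quotient maps and the induced isomorphism explicit.
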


\begin{proof}
Let us write $Q$ for $\opens(G)$. As a sup-lattice, $X\tp_Q Y$ is the quotient of $X\tp_{G_0} Y$ (which equals $X\tp_{Q_0} Y$) generated by the middle-linearity relations
\[xa\tp y=x\tp ay\]
for all $a\in Q$, and it is sufficient to take $a\in Q_\ipi$. By general sup-lattice algebra \cite{JT}, the sup-lattice quotient can be concretely identified with the subset of $X\tp_{G_0} Y$ whose elements $\xi$ are closed under the relations; that is, such that for all $x\in X$, $y\in Y$ and $s\in Q_\ipi$ we have
\begin{eq}
xs\tp y\le \xi\iff x\tp sy\le\xi\;.\label{midlin}
\end{eq}%
By \ref{Gtenseqquot}, $X\tp_G Y$ can be identified with the set of invariant elements for the action Eq.\ (\ref{action}), so let us show that the invariant elements are the same as those which satisfy the condition (\ref{midlin}). Let $\xi$ be an invariant element of $X\tp_{G_0} Y$, \ie, such that $s\act\xi\le\xi$ for all $s\in Q_\ipi$, and let $x\in X$, $y\in Y$, and $s\in Q_\ipi$. If $xs\tp y\le\xi$ we obtain
\[x\tp sy=x\tp ss^*s y=xss^*\tp sy=s\act(xs\tp y)\le s\act\xi\le\xi\;,\]
and, similarly, if $x\tp sy\le\xi$ we conclude $xs\tp y\le \xi$. Hence, $\xi$ satisfies (\ref{midlin}). For the converse, assume that $\xi=\V_i x_i\tp y_i$ satisfies (\ref{midlin}).
For all $i$ and $s\in Q_\ipi$, we have
\[x_i s^*s\tp y_i\le x_i\tp y_i\le\xi\]
and, using (\ref{midlin}),
\[s\act(x_i\tp y_i)=x_i s^*\tp sy_i\le\xi\;.\]
Hence, $\xi$ is invariant, and we conclude that $X\tp_G Y$ coincides, concretely as a subset of $X\tp_{G_0} Y$, with $X\tp_Q Y$. \qed
\end{proof}

\section{Functoriality II}\label{funct2}

Now we address the main aim of this paper, which is to show that groupoid bi-actions can be identified with a natural notion of bilocale for inverse quantal frames, and to establish an ensuing (bicategorical) equivalence between \'etale groupoids and inverse quantal frames. Following that, we discuss connections to algebraic morphisms of groupoids in the sense of \cite{BS05,Bun08}.

\subsection{Bimodules}

Let $Q$ and $R$ be unital quantales. By a \emph{$Q$-$R$-bimodule} is meant a sup-lattice $_Q X_R$, which can simply be denoted by $X$, equipped with structures of unital left $Q$-module and unital right $R$-module that satisfy the associativity condition
\[(rx) q=r(x q)\ \ \ \ \textrm{for all }r\in R,\ x\in X,\ q\in Q\;.\]
Similarly to rings, we obtain a bicategory \cite[sec.\ 2.5, 5.7]{Benabou}: the 0-cells are the unital quantales; the 1-cells are the bimodules $_Q X_R$; the composition of 1-cells $_Q X_R$ and $_R Y_S$ is given by $Y\circ X=X\tp_R Y$; and the 2-cells are the homomorphisms of bimodules, with composition defined as usual. A homomorphism of unital quantales $h:Q\to R$ can be identified with a $Q$-$R$-bimodule $X_h$, which is $R$ with the left $Q$-action induced by $h$ and the right $R$-action given by multiplication; there are canonical isomorphisms
\[
X_{h\circ k} \cong X_h\circ X_k\;,
\]
and the assignments $Q\mapsto Q$ and $h\mapsto X_h$ embed the category of unital quantales in the bicategory.

\begin{definition}\label{QRbilocale}
Let $Q$ and $R$ be inverse quantal frames. A \emph{$Q$-$R$-bilocale} is a bimodule $_Q X_R$ that is also a locale such that for all $b\in Q_0$, $c\in R_0$ and $x\in X$ the following \emph{left and right anchor conditions} hold:
\begin{eqarray}
bx &=& b1\land x\label{leftbc}\\
xc &=& 1c\land x\;.\label{rightbc}
\end{eqarray}%
A \emph{map} of bilocales $f:{_Q X_R}\to{_Q Y_R}$ is a map of locales whose inverse image $f^*$ is a homomorphism of bimodules, and the resulting category is denoted by $Q$-$R$-$\Loc$.
\end{definition}

It is immediate that any inverse quantal frame $Q$ is a $Q$-$Q$-bilocale, due to Eqs.\ (\ref{iqfprop}). In addition, bilocales behave well with respect to tensor products:

\begin{lemma}
Let $Q$, $R$, $S$ be inverse quantal frames. The tensor product $X\tp_R Y$ of bilocales $_Q X_R$ and $_R Y_S$ is a $Q$-$S$-bilocale.
\end{lemma}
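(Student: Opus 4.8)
The plan is to realize $X\tp_R Y$ as a subframe of a locale on which $Q$ and $S$ already act, and then to read off the bilocale structure by restriction. Write $H=\groupoid(R)$, so that $\opens(H)=R$ by (\ref{gpdiqfequiv}) and $H_0=R_0$. The right anchor condition (\ref{rightbc}) makes the bimodule $X$ into a right $R$-locale, hence a right $H$-locale, and the left anchor condition (\ref{leftbc}) makes $Y$ into a left $H$-locale. By the theorem on diagonal actions in Section \ref{gpdactions}, the pullback $X\tp_{R_0}Y$ --- which is a locale --- carries a diagonal left $R$-action $s\act(x\tp y)=xs^*\tp sy$ (for $s\in R_\ipi$) turning it into a left $R$-locale. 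By Theorem \ref{thmtp} together with (\ref{gpdiqfequiv}) we have $X\tp_R Y=X\tp_H Y$, and by Lemma \ref{Gtenseqquot} this equals $(X\tp_{R_0}Y)/H$; by the theorem on orbits the latter is precisely the subframe $F\subseteq X\tp_{R_0}Y$ of elements invariant under the diagonal action. In particular $X\tp_R Y$ is a locale.

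Next I would put the two ``outer'' actions on the ambient locale $X\tp_{R_0}Y$ by $q\act(x\tp y)=(qx)\tp y$ and $(x\tp y)\act t=x\tp(yt)$, for $q\in Q$ and $t\in S$; these are well defined (they respect $R_0$-middle-linearity), they are unital sup-lattice homomorphisms, and the bimodule associativity of $X$ and of $Y$ shows at once that the two actions commute with each other. The key point is that each of them also commutes with the diagonal $R$-action: on generators, $s\act(q\act(x\tp y))=(qx)s^*\tp sy=q\act(s\act(x\tp y))$ for $s\in R_\ipi$ (using $q(xs^*)=(qx)s^*$), and symmetrically on the right. Since these maps preserve joins and $R\cong\lcc(R_\ipi)$ by (\ref{lcc}), they commute with the whole diagonal $R$-action, and therefore carry invariant elements to invariant elements. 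Hence the $Q$- and $S$-actions restrict to $F=X\tp_R Y$, making it a $Q$-$S$-bimodule; unitality and associativity are inherited from the ambient actions.

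It remains to verify the anchor conditions (\ref{leftbc}) and (\ref{rightbc}) on $X\tp_R Y$. For $b\in Q_0$ I would compute, on a generator and using the left anchor of $X$ together with the elementary-tensor meet identity $(a\tp a')\wedge(c\tp c')=(a\wedge c)\tp(a'\wedge c')$ in the pullback (the same identity already used in the proof of the diagonal action theorem), that $b\act(x\tp y)=(b1\wedge x)\tp y=(b1\tp 1)\wedge(x\tp y)=(b\act 1)\wedge(x\tp y)$, where $1=1_X\tp 1_Y$; extending by joins gives $b\act\xi=(b\act 1)\wedge\xi$ for all $\xi$. The right anchor for $c\in S_0$ is entirely symmetric, using the right anchor of $Y$. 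Finally, because $F$ is a subframe of $X\tp_{R_0}Y$ its binary meets and its top element coincide with those computed in the ambient locale, so these identities, proved in $X\tp_{R_0}Y$, are exactly the required anchor conditions in $X\tp_R Y$.

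The main obstacle I anticipate is bookkeeping rather than conceptual: one must make sure the outer $Q$- and $S$-actions genuinely descend to the invariant subframe, which is exactly what their commutation with the diagonal $R$-action secures, and one must check that the meet appearing in the anchor condition is the frame meet of $X\tp_R Y$ --- legitimate precisely because meets and tops in a subframe agree with those of the ambient frame. The one computation doing real work is the elementary-tensor meet identity in the pullback $X\tp_{R_0}Y$; everything else is a formal consequence of the bimodule axioms and the results of Section \ref{gpdactions}.
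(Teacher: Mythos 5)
Your proof is correct and follows essentially the same route as the paper's: localeness of $X\tp_R Y$ via Theorem \ref{thmtp}, and the anchor conditions verified on generators by the computation $b(x\tp y)=(b1\wedge x)\tp y=(b1\tp 1)\wedge(x\tp y)$. The only real difference is that you spell out why the outer $Q$- and $S$-actions descend to the invariant subframe (their commutation with the diagonal $R$-action), a point the paper subsumes under the standard bimodule structure of the tensor product.
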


\begin{proof}
$X\tp_R Y$ is a $Q$-$S$-bimodule, it is a locale due to \ref{thmtp}, and it is a bilocale because the left (and the right) anchor condition holds, since for all $b\in Q_0$, $x\in X$ and $y\in Y$ we have
\[b(x\tp y) = (b x)\tp y = (b1\wedge x)\tp y = (b1)\tp 1\wedge x\tp y = b(1\tp 1)\wedge x\tp y\;. \qed\]
\end{proof}

Hence, the following bicategory is well defined:

\begin{definition}
The bicategory $\biIQLoc$ has the inverse quantal frames as 0-cells, the bilocales as 1-cells, and the maps of bilocales as 2-cells. The composition of 1-cells $_Q X_R$ and $_R Y_S$ is defined by
\[Y\circ X = X\tp_R Y\;,\]
and the coherence isomorphisms are the maps of bilocales whose inverse images are coherence isomorphisms in the usual ``ring'' sense.
\end{definition}

\begin{lemma}\label{homsasbimodules}
The assignments $Q\mapsto Q$ and $h\mapsto X_h$ embed $\InvQuF$ into $\biIQLoc$.
\end{lemma}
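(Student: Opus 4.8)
The plan is to show that the embedding of the category of unital quantales into the bicategory of bimodules, recalled just before Definition~\ref{QRbilocale}, restricts to an embedding of $\InvQuF$ into $\biIQLoc$. Since that general embedding (together with its canonical isomorphisms $X_{h\circ k}\cong X_h\circ X_k$) is already in hand, almost everything is inherited once two points are settled: that each $X_h$ is a \emph{bilocale}, not merely a bimodule, and that the canonical isomorphisms $X_{h\circ k}\cong X_h\circ X_k$ are \emph{maps of bilocales}, not merely bimodule isomorphisms. Injectivity on $0$-cells is trivial, and faithfulness on $1$-cells will follow easily.

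First I would check that $X_h$ is a $Q$-$R$-bilocale for every morphism $h\colon Q\to R$ of $\InvQuF$. Its underlying locale is $R$, its left $Q$-action is $q\act x=h(q)x$, and its right $R$-action is $x\act r=xr$. Since $h$ is unital, $b\le e_Q$ implies $h(b)\le h(e_Q)=e_R$, so $h(b)\in R_0$; hence the left anchor condition~(\ref{leftbc}) reads $h(b)x=h(b)1\wedge x$, which is the first identity of~(\ref{iqfprop}) applied in $R$, and the right anchor condition~(\ref{rightbc}), namely $xc=1c\wedge x$ for $c\in R_0$, is the second identity of~(\ref{iqfprop}). By Proposition~\ref{necessarilyinv} the involutivity of $h$ is automatic, so this argument covers every morphism of $\InvQuF$.

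Second, and this is where the real work lies, I would verify that the canonical isomorphism $X_{h\circ k}\cong X_h\circ X_k=X_k\tp_R X_h$ is a map of bilocales. Explicitly it is $r\tp s\mapsto h(r)s$, with inverse $s\mapsto e_R\tp s$; both are bimodule isomorphisms by the general theory, so it only remains to see that one of them preserves finite meets. The delicate point is that $h$ itself need not preserve meets --- that is precisely the extra condition cutting out $\IQFrm$ --- so meet-preservation cannot be read off the formula and must instead be extracted from the tensor-product structure. I would dispatch this by observing that, once the left $Q$-action is forgotten, $X_k\tp_R X_h$ has the same underlying locale and right $S$-module structure as ${}_R R_R\tp_R X_h$, so that the isomorphism is an instance of the unit coherence isomorphism $R\tp_R X_h\cong X_h$ of $\biIQLoc$; being a coherence isomorphism of a bicategory already shown to be well defined, it is a map of bilocales. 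Failing that, one checks directly, using the identification of $X_k\tp_R X_h$ with the subframe of invariant elements of $X_k\tp_{R_0}X_h$ (Lemma~\ref{Gtenseqquot} and Theorem~\ref{thmtp}), that $r\tp s\mapsto h(r)s$ preserves binary meets of invariant elements.

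Finally I would note that the assignment is faithful on $1$-cells, since $h$ is recovered from the bilocale $X_h$ by $h(q)=q\act e_R$, where $e_R$ is the unit of the target $R$ and $\act$ is the left $Q$-action; thus $X_h=X_{h'}$ forces $h=h'$. Together with $X_{\ident_Q}={}_Q Q_Q$ (the identity $1$-cell on the nose) and the isomorphisms of the previous step, this exhibits $Q\mapsto Q$, $h\mapsto X_h$ as the required embedding. I expect the single genuinely delicate step to be the meet-preservation just discussed; presenting it as the unit coherence isomorphism of $\biIQLoc$ seems the cleanest route, with the direct computation via invariant elements as a fallback.
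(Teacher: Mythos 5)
Your proposal is correct, and its essential content coincides with the paper's proof: the paper likewise takes the embedding of unital quantales into the bicategory of bimodules as already given and reduces the lemma to checking that each $X_h$ is a bilocale, verifying the right anchor condition from Eqs.~(\ref{iqfprop}) and the left one from $h(b)\le e_R$ via $b\act x=h(b)x=h(b)1\wedge x=b\act1\wedge x$ --- exactly your first step. Where you diverge is in treating the coherence isomorphisms $X_{h\circ k}\cong X_h\circ X_k$ as ``the real work'': the paper does not revisit them here, since the definition of $\biIQLoc$ already declares its coherence isomorphisms to be the usual ring-theoretic ones regarded as maps of bilocales. More to the point, the delicacy you identify dissolves: the isomorphism $r\tp s\mapsto h(r)s$ is a \emph{bijective} sup-lattice homomorphism, hence an order isomorphism of the underlying posets, and therefore automatically preserves all meets that exist --- no appeal to meet-preservation of $h$, to unit coherence in $\biIQLoc$, or to the description of the tensor product via invariant elements is needed. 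Your fallback routes would work but are considerably heavier than this one-line observation. The faithfulness remark ($h(q)=q\act e_R$) is correct and is likewise inherited from the general bimodule embedding rather than re-proved in the paper.
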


\begin{proof}
All we have to do is prove that if $h:Q\to R$ is a morphism of $\InvQuF$ the bimodule $X_h$ is a bilocale rather than just a bimodule. It is a locale because $R$ is, the right anchor condition follows from Eqs.\ (\ref{iqfprop}), and the left anchor condition holds because $h$ is unital and thus $h(b)\le e$ for all $b\in Q_0$:
\[
b\act x=h(b)x=h(b)1\wedge x=b\act 1\wedge x\;. \qed
\]
\end{proof}

\subsection{Bi-actions}\label{sec:biact}

Let $G$ and $H$ be localic \'etale groupoids. A $G$-$H$-\emph{bilocale} is a locale $_G X_H$, which can be simply denoted by $X$, equipped with a left $G$-locale structure $(p,\mathfrak a)$ and a right $H$-locale structure $(q,\mathfrak b)$ such that the following diagrams in $\Loc$ are commutative:

\begin{eq}\label{biaxioms}
\begin{array}{c}
\xymatrix{G_1\tp_{G_0} X\ar[r]^-{\mathfrak a}\ar[d]_{\pi_2}&X\ar[d]^q\\
X\ar[r]_q&H_0}
\xymatrix{X\tp_{H_0} H_1\ar[r]^-{\mathfrak b}\ar[d]_{\pi_1}&X\ar[d]^p\\
X\ar[r]_p&G_0}\\
\xymatrix{G_1\tp_{G_0} X\tp_{H_0} H_1\ar[rr]^-{\mathfrak a\tp 1}\ar[d]_{1\tp \mathfrak b}&&X\tp_{H_0} H_1\ar[d]^{\mathfrak b}\\
G_1\tp_{G_0} X\ar[rr]_{\mathfrak a}&&X}
\end{array}
\end{eq}%
The first two diagrams assert that the anchor map of the $G$-locale is invariant under the action of $H$, and that the anchor map of the $H$-locale is invariant under the action of $G$. Both are in line with the idea that a bilocale may be regarded as being the graph of a binary relation between the ``orbit spaces'' of $G$ and $H$, and they ensure that the third diagram (associativity) makes sense.

A \emph{map} of bilocales $f:{_G X_H}\to {_G Y_H}$ is a map of locales that is both a map of left $G$-locales and a map of right $H$-locales. The resulting category of bilocales is denoted by $G$-$H$-$\Loc$. The maps of bilocales are the 2-cells of a bicategory, denoted by $\biGrpd$, whose 0-cells are the \'etale groupoids and whose 1-cells are the $G$-$H$-bilocales. The composition of 1-cells is defined by the tensor product: given 1-cells $_G X_H$ and $_H Y_K$ we define
\[
Y\circ X = X\tp_H Y\;.
\]
The coherence isomorphisms are standard (\cf\ \cite{Moer87,Moer90}).

\begin{theorem}\label{catiso}
Let $G$ and $H$ be \'etale groupoids. The categories $G$-$H$-$\Loc$ and $\opens(G)$-$\opens(H)$-$\Loc$ are isomorphic.
\end{theorem}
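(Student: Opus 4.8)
The plan is to build the isomorphism directly from the two single-sided isomorphisms of categories $G$-$\Loc\cong Q$-$\Loc$ and $H$-$\Loc\cong R$-$\Loc$ of Section~\ref{secgrpdact}, where $Q=\opens(G)$ and $R=\opens(H)$; crucially, each of these is the identity on underlying locales and on underlying locale maps. Given a $G$-$H$-bilocale $X$, its left $G$-locale structure transports to a left $Q$-locale structure and its right $H$-locale structure to a right $R$-locale structure, on one and the same locale $X$. The left and right anchor conditions (\ref{leftbc}) and (\ref{rightbc}) required of a $Q$-$R$-bilocale are then nothing but the $Q$-locale and $R$-locale anchor conditions already produced by the one-sided isomorphisms, so on objects the only thing left to pin down is how the compatibility data on the two sides correspond: the three diagrams of (\ref{biaxioms}) against the single bimodule associativity law $(qx)r=q(xr)$.

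For the associativity (third) diagram I would argue through direct images, which exist and compose covariantly because $\mathfrak a$ and $\mathfrak b$, being pullbacks of the (local-homeomorphism) domain maps, are open, and because open maps are stable under the pullbacks defining the tensor products; in particular an open map is determined by its direct image. Hence equality of the two composites $\mathfrak a\circ(1\tp\mathfrak b)$ and $\mathfrak b\circ(\mathfrak a\tp 1)$ as maps of locales is equivalent to equality of their direct images $\mathfrak a_!\circ(1\tp\mathfrak b_!)$ and $\mathfrak b_!\circ(\mathfrak a_!\tp 1)$, and the latter, precomposed with the canonical quotients onto the relevant pullbacks, is exactly the statement $(sx)t=s(xt)$ for all $s\in Q_\ipi$, $t\in R_\ipi$ — hence, extending along joins of partial units, for all $q\in Q$, $r\in R$. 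One checks the same equivalence concretely through the inverse-image formulas (\ref{alphastareq}) and (\ref{alphastareqright}), under which the two sides of the diagram become
\[(1\tp\mathfrak b)^*\mathfrak a^*(x)=\V_{s\in Q_\ipi,\,t\in R_\ipi}s\tp(s^*x)t^*\tp t,\qquad (\mathfrak a\tp 1)^*\mathfrak b^*(x)=\V_{s\in Q_\ipi,\,t\in R_\ipi}s\tp s^*(xt^*)\tp t.\]
The first two diagrams, whose only purpose on the groupoid side is to make the third well typed, translate on the module side into the statement that the anchor elements $p^*(b)=b1$ ($b\in Q_0$) and $q^*(c)=1c$ ($c\in R_0$) are invariant, and this is forced by the bimodule law together with the anchor conditions: for $s\in Q_\ipi$ and $c\in R_0$ one has, by (\ref{rightbc}), $s\cdot(1c)=(s\cdot 1)\cdot c=1c\wedge(s\cdot 1)\le 1c$, with the symmetric computation for the second diagram. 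Consequently the three diagrams hold precisely when $X$ is a $Q$-$R$-bimodule.

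On morphisms there is nothing to do beyond unwinding definitions: a map of $G$-$H$-bilocales is a map of locales that is simultaneously a map of left $G$-locales and of right $H$-locales, which under the one-sided isomorphisms says exactly that its inverse image is both a left $Q$-module and a right $R$-module homomorphism, that is, a map of $Q$-$R$-bilocales; since both assignments fix the underlying locale maps they preserve identities and composition and are mutually inverse on hom-sets, yielding a strict isomorphism rather than a mere equivalence. The main technical obstacle is the bookkeeping behind the direct-image step, namely justifying that direct image is compatible with the pullback (tensor) constructions, so that $(1\tp\mathfrak b)_!=1\tp\mathfrak b_!$ and $(\mathfrak a\tp 1)_!=\mathfrak a_!\tp 1$; this rests on the openness of $\mathfrak a$, $\mathfrak b$ and the stability of open maps under pullback recalled in Section~\ref{groupoidsandquantales}, together with the identifications $G_1\tp_{G_0}X=Q\tp_{Q_0}X$ used throughout Section~\ref{gpdactions}. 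Everything else is routine once the one-sided isomorphisms are in hand.
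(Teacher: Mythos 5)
Your proposal is correct and follows essentially the same route as the paper: the one-sided isomorphisms $G\textrm{-}\Loc\cong Q\textrm{-}\Loc$ and $H\textrm{-}\Loc\cong R\textrm{-}\Loc$ take care of the anchor conditions and of morphisms, the associativity diagram is matched with the bimodule law through direct images of the open action maps, and the two remaining diagrams of (\ref{biaxioms}) are handled via inverse images. The only cosmetic difference is that for those two diagrams you reduce to invariance of the anchor elements $b1$ and $1c$ and verify it in one line from the anchor conditions plus bimodule associativity, whereas the paper computes $\mathfrak a^*(1_X c)$ directly from Eqs.\ (\ref{rightadjoint}) and (\ref{alphastareq}) --- which is the same computation that underlies the invariance criterion of section \ref{gpdactions}.
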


\begin{proof}
Let us denote $\opens(G)$ and $\opens(H)$ by $Q$ and $R$, respectively. Any bilocale $_G X_H$ has both a left $Q$-locale structure and a right $R$-locale structure, and it is a routine matter to verify that it is a $Q$-$R$-bilocale because the associativity condition,
\begin{eq}\label{bimoduassoc}
(ax)b = a(xb)
\end{eq}%
for all $a\in Q$, $x\in X$, and $b\in R$,
is essentially the direct image version of the associativity diagram of (\ref{biaxioms}):
\begin{eq}\label{dirimassoc}
\vcenter{\xymatrix{Q\tp_{Q_0} X\tp_{R_0} R\ar[rr]^-{\mathfrak a_!\tp \ident}\ar[d]_{\ident\tp \mathfrak b_!}&&X\tp_{R_0} R\ar[d]^{\mathfrak b_!}\\
Q\tp_{Q_0} X\ar[rr]_{\mathfrak a_!}&&X}}
\end{eq}%
Moreover, from the general results on groupoid actions (\cf\ section \ref{secgrpdact}) it follows that a map of locales $f:X\to Y$ between bilocales $_G X_H$ and $_G Y_H$ is a morphism in $G$-$H$-$\Loc$ if and only if it is a morphism in $Q$-$R$-$\Loc$. Therefore, all
that we have left to prove is that every $Q$-$R$-bilocale arises from a (necessarily unique) $G$-$H$-bilocale; that is, that the unique $G$-locale and $H$-locale structures obtained from the $Q$-locale and $R$-locale structures of a $Q$-$R$-bilocale further satisfy the commutativity of the three bilocale diagrams of (\ref{biaxioms}). Let $_Q X_R$ be a bilocale, and let $(p,\mathfrak a)$ and $(q,\mathfrak b)$ be, respectively, the unique $G$-locale and $H$-locale structures that it determines. The commutativity of the third diagram of (\ref{biaxioms}) follows from reversing the previous argument for associativity: it follows from the commutativity of (\ref{dirimassoc}), which is equivalent to the bimodule associativity. This kind of argument does not work for the first two diagrams of (\ref{biaxioms}) because we are not assuming that $p$ and $q$ are open maps, but we can nevertheless establish their commutativity in terms of inverse images of the locale maps.
Let us do this only for the first one,
\begin{eq}\label{repetition}
\vcenter{\xymatrix{G_1\tp_{G_0} X\ar[r]^-{\mathfrak a}\ar[d]_{\pi_2}&X\ar[d]^q\\
X\ar[r]_q&H_0}}
\end{eq}%
since the second is proved similarly. 
Recall \cite{GSQS} that the direct image of the open map $u:H_0\to H_1$ restricts to an order isomorphism $u_!:H_0\to R_0$ such that the following triangle commutes in $\SL$:
\[
\xymatrix{
&X\\
H_0\ar[ur]^{d^*}\ar[rr]_{u_!}&&R_0\ar[ul]_{1_X\act(-)}\;.
}
\]
Hence, the commutativity of (\ref{repetition}) is equivalent to the commutativity of the diagram in $\Frm$
\[
\vcenter{\xymatrix{
G_1\tp_{G_0} X&X\ar[l]_-{\mathfrak a^*}\\
X\ar[u]^{\pi_2^*}&R_0\ar[l]^{1_X\act(-)}\ar[u]_{1_X\act(-)}\;,
}}
\]
which commutes if and only if for all $c\in R_0$ we have
\[\mathfrak a^*(1_X\act c)=1_Q\otimes (1_X \act c)\;.\]
And the latter condition holds because, on one hand, from Eq.\ (\ref{rightadjoint})  and the equality  \[1_Q 1_X c = 1_X c\]
we obtain
\[1_Q\tp 1_X c\le \mathfrak a^*(1_X c)\;;\]
and, on the other, from Eq.\ (\ref{alphastareq}) we obtain
\[\mathfrak a^*(1_X c) = \V_{s\in Q_\ipi} s\otimes s^* 1_X c\le 1_Q\otimes 1_X c\;. \qed\]
\end{proof}

\begin{corollary}
The bicategories $\biGrpd$ and $\biIQLoc$ are biequivalent.
\end{corollary}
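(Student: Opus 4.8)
The plan is to promote the object-level correspondence $G\mapsto\opens(G)$ of (\ref{gpdiqfequiv}) to a pseudofunctor
\[
\opens:\biGrpd\to\biIQLoc
\]
and then to verify the standard criterion for a biequivalence, namely essential surjectivity on 0-cells together with local equivalence on all hom-categories.

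First I would use Theorem \ref{catiso} to define $\opens$ on 1-cells and 2-cells. That theorem gives, for each pair of \'etale groupoids $G,H$, an isomorphism of categories
\[
G\textrm{-}H\textrm{-}\Loc\ \cong\ \opens(G)\textrm{-}\opens(H)\textrm{-}\Loc\;,
\]
which is exactly the local functor
\[
\biGrpd(G,H)\ \longrightarrow\ \biIQLoc(\opens G,\opens H)
\]
that we require; in particular every local functor is not merely an equivalence but an isomorphism of categories, so the local-equivalence condition is automatic.

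The main step is to check that these local isomorphisms cohere into a pseudofunctor, and here the decisive fact is that composition is preserved on the nose. The composite of $_G X_H$ and $_H Y_K$ in $\biGrpd$ is $X\tp_H Y$, whereas the composite of the corresponding bilocales in $\biIQLoc$ is $X\tp_{\opens(H)} Y$; by Theorem \ref{thmtp} these are equal as locales. The identity 1-cells are likewise matched: the identity on $G$ is the regular bilocale carried by $G_1$, whose image under $\opens$ is $\opens(G)$ viewed as an $\opens(G)$-$\opens(G)$-bilocale, which is precisely the identity 1-cell on $\opens(G)$ (cf.\ \ref{homsasbimodules}). Since, by definition, both bicategories equip their (coinciding) tensor products with the same canonical ring-theoretic associators and unitors, the coherence data of $\biGrpd$ is transported identically to that of $\biIQLoc$; the compositor and unitor isomorphisms of the pseudofunctor may therefore be taken to be identities, and the coherence axioms hold trivially.

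Finally I would invoke essential surjectivity: by (\ref{gpdiqfequiv}) we have $\opens(\groupoid(Q))=Q$ for every inverse quantal frame $Q$, so $\opens$ is in fact surjective on 0-cells. A pseudofunctor that is essentially surjective on 0-cells and a local equivalence on every hom-category is a biequivalence, and both conditions have been established (indeed with isomorphisms in place of equivalences), whence $\biGrpd$ and $\biIQLoc$ are biequivalent. I expect the only genuine effort to lie in the bookkeeping of the previous paragraph---confirming that the local isomorphisms of Theorem \ref{catiso} respect composition, identities and coherence---but this is routine given Theorems \ref{catiso} and \ref{thmtp} and the shared definition of the coherence data.
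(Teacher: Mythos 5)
Your proposal is correct and follows the same route the paper intends: the corollary is stated without a separate proof precisely because Theorem \ref{catiso} supplies the local isomorphisms of hom-categories, Theorem \ref{thmtp} identifies the two tensor-product compositions on the nose, and (\ref{gpdiqfequiv}) gives surjectivity on 0-cells, which together yield the biequivalence by the standard criterion. Your additional bookkeeping about identities and coherence data is exactly the routine verification the paper leaves implicit.
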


\subsection{Algebraic morphisms}\label{algebraicmorphisms}

Due to the biequivalence, the embedding $\InvQuF\to\biIQLoc$ yields a further embedding
\[
\InvQuF\to\biGrpd
\]
such that each homomorphism of inverse quantal frames $h:Q\to R$ maps to a $\groupoid(Q)$-$\groupoid(R)$-bilocale. Such a bilocale is precisely the same as
an \emph{algebraic morphism} of groupoids in the sense of Buneci and Stachura \cite{BS05,Bun08}.
Moreover, their composition of algebraic morphisms is, up to coherence, the same as that which results from the embedding. But it is strictly associative and therefore defines a category. The definitions can be carried over to very general groupoids:

\begin{definition}(Based on \cite{BS05}.)
Let $G$ and $H$ be groupoids. By an \emph{algebraic morphism} from $G$ to $H$ is meant a left action of $G$ on $H_1$ that commutes with right multiplication in $H$. More precisely, an algebraic morphism
\[
(p,\mathfrak a): G\to H
\]
consists of maps $\mathfrak a:G_1\tp_{G_0} H_1\to H_1$ and $p:H_1\to G_0$ that define a left $G$-locale and make the following diagrams commute:
\begin{eq}\label{amaxioms}
\begin{array}{c}
\xymatrix{G_1\tp_{G_0} H_1\ar[r]^-{\mathfrak a}\ar[d]_{\pi_2}&H_1\ar[d]^r\\
H_1\ar[r]_r&H_0}
\xymatrix{H_1\tp_{H_0} H_1\ar[r]^-m\ar[d]_{\pi_1}&X\ar[d]^p\\
H_1\ar[r]_p&G_0}\\
\xymatrix{G_1\tp_{G_0} H_1\tp_{H_0} H_1\ar[rr]^-{\mathfrak a\tp 1}\ar[d]_{1\tp m}&&H_1\tp_{H_0} H_1\ar[d]^m\\
G_1\tp_{G_0} H_1\ar[rr]_{\mathfrak a}&&H_1}
\end{array}
\end{eq}%
Given two algebraic morphisms
\[
G\stackrel{(p,\mathfrak a)}\longrightarrow H\stackrel{(q,\mathfrak b)}\longrightarrow K\;,
\]
their composition is
\[(q,\mathfrak b)\circ(p,\mathfrak a) = (p\circ u_H\circ q,\mathfrak c)\;,\]
where $\mathfrak c:G_1\tp_{G_0} K_1\to K_1$ is defined, in point-set notation, by
\[
g\act_{\mathfrak c} k = (g\act_{\mathfrak a} u_H(q(k)))\act_{\mathfrak b} k\;.
\]
\end{definition}

This definition applies to internal groupoids in any category with enough pullbacks, for instance topological or localic groupoids, Lie groupoids or, as in \cite{BS05,Bun08}, locally compact groupoids equipped with Haar systems of measures. For localic \'etale groupoids, denoting the resulting category by $\AGrpd$, we therefore conclude:

\begin{theorem}\label{equivalginvquf}
$\AGrpd$ and $\InvQuF$ are equivalent categories.
\end{theorem}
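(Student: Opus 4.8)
The plan is to assemble the equivalence $\AGrpd\simeq\InvQuF$ by combining the biequivalence of the preceding corollary with the identification of algebraic morphisms as the bilocales coming from quantale homomorphisms. First I would recall that Lemma~\ref{homsasbimodules} embeds $\InvQuF$ fully and faithfully into $\biIQLoc$ via $h\mapsto X_h$, and that the biequivalence $\biGrpd\simeq\biIQLoc$ transports this to an embedding $\InvQuF\to\biGrpd$. The crux is the claim, asserted just before the theorem, that the $1$-cells in $\biGrpd$ in the image of this embedding are \emph{exactly} the algebraic morphisms. So the first substantial step is to establish that identification precisely: starting from a homomorphism $h:Q\to R$ with $Q=\opens(G)$, $R=\opens(H)$, I would unwind the bilocale $X_h$ (which is $R$ as a locale, with left $Q$-action through $h$ and right $R$-action by multiplication) through the isomorphism of Theorem~\ref{catiso} into a $\groupoid(Q)$-$\groupoid(H)$-bilocale, and check that the underlying locale is $H_1$, that the right $H$-action is right multiplication $m$, and that the commuting diagrams (\ref{biaxioms}) reduce exactly to the algebraic-morphism axioms (\ref{amaxioms}).

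Next I would verify the match of compositions. The embedding's composition of $1$-cells is the tensor product $X\tp_R Y$, which by Theorem~\ref{thmtp} equals the groupoid tensor product $X\tp_H Y$; I would compare this against the explicit composition formula $g\act_{\mathfrak c}k=(g\act_{\mathfrak a}u_H(q(k)))\act_{\mathfrak b}k$ given in the definition of algebraic morphism, checking that they agree up to the coherence isomorphisms of the bicategory. The key structural point, flagged in the text, is that although the bicategorical composition is only associative up to coherent isomorphism, the algebraic-morphism composition is \emph{strictly} associative, so that $\AGrpd$ is a genuine (1-)category rather than merely a bicategory. I would argue that the strictification arises because each algebraic morphism is presented on the nose by a homomorphism $h$, and composition of homomorphisms in $\InvQuF$ is strictly associative; concretely, the canonical isomorphisms $X_{h\circ k}\cong X_h\circ X_k$ recorded in the discussion preceding Definition~\ref{QRbilocale} let one replace the tensor-product composite by the strictly associative composite $X_{h\circ k}$.

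With the object-level bijection $Q\leftrightarrow\groupoid(Q)$ from (\ref{gpdiqfequiv}) and the morphism-level identification in hand, the remaining step is to package these into a functor $\InvQuF\to\AGrpd$ and check it is full, faithful, and essentially surjective. Fullness and faithfulness follow from the full faithfulness of $h\mapsto X_h$ (Lemma~\ref{homsasbimodules}) together with the fact that \emph{every} algebraic morphism is, by the identification above, of the form $X_h$ for a unique $h$; essential surjectivity on objects is immediate from $\groupoid(\opens(G))\cong G$.

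The main obstacle I anticipate is precisely the strictness-versus-coherence bookkeeping: one must show that the composition formula of algebraic morphisms is genuinely strictly associative and that it coincides, under the biequivalence, with the up-to-coherence tensor composition in $\biGrpd$. This requires carefully tracking how the coherence isomorphisms $X_{h\circ k}\cong X_h\circ X_k$ and the associators of $\biGrpd$ interact, and verifying that the chosen strict representatives are closed under composition. By contrast, the reduction of the bilocale diagrams to the algebraic-morphism axioms, and the transfer of full faithfulness, are essentially routine once Theorem~\ref{catiso}, Theorem~\ref{thmtp}, and Lemma~\ref{homsasbimodules} are invoked.
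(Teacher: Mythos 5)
Your proposal is correct and follows essentially the same route as the paper, which states the theorem as a direct consequence of the preceding discussion --- the embedding $\InvQuF\to\biIQLoc\to\biGrpd$, the identification of the bilocales $X_h$ with algebraic morphisms, and the observation that the two compositions agree up to coherence while the algebraic-morphism composition is strictly associative --- without writing out a separate formal proof. Your version merely makes explicit the verifications (unwinding $X_h$ through Theorem~\ref{catiso}, matching diagrams (\ref{biaxioms}) with (\ref{amaxioms}), and the strictness bookkeeping) that the paper leaves implicit.
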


An immediate consequence of this equivalence is, of course, that algebraic morphisms specialize covariantly to homomorphisms of discrete groups and, contravariantly, to locale homomorphisms, as was stated by Buneci and Stachura, whose main goal was to define a (covariant) functor from groupoids to the category of C*-algebras \cite{BS05}: their functor assigns to each $\sigma$-locally compact  Hausdorff groupoid $G$ (equipped with a Haar system of measures) the multiplier algebra of the C*-algebra that arises as the completion of $C_c(G)$ with respect to a norm which is different from either the usual --- maximum or reduced --- ones. By restricting to \'etale groupoids (with counting measures) one obtains, due to \ref{equivalginvquf}, a non-trivial example of a functor from quantales to C*-algebras:

\begin{corollary}
There is a covariant functor from the full subcategory of $\InvQuF$ whose objects are the topologies of the $\sigma$-locally compact Hausdorff {\'e}tale groupoids to the category of C*-algebras and $*$-homomorphisms.
\end{corollary}

Algebraic morphisms have also been used by Buss, Exel and Meyer \cite{BEM12} in order to define a covariant functor from topological \'etale groupoids to inverse semigroups. Their functor can be identified, due to \ref{equivalginvquf}, with the covariant partial units functor $\ipi$ from spatial inverse quantal frames to inverse semigroups, and therefore it readily extends to localic groupoids.

The results in this section show that for \'etale groupoids the algebraic morphisms are subsumed by quantale homomorphisms. It is interesting to note that, albeit under completely different terminology, and restricting to discrete groupoids, the idea of defining a morphism of groupoids $G\to H$ to be a homomorphism of quantales $\opens(G)\to\opens(H)$ can be found in the work of Zakrzewski \cite{Zak90}, whose notion of pseudospace (\cf\ \cite{Wo80}) is based on the idea of replacing the underlying linear space of an associative $*$-algebra by the sup-lattice structure of a powerset, hence leading to algebras that are unital involutive quantales and furthermore, as the author states, are equivalent to discrete groupoids.
There is more than one way in which such ideas can be carried over to more general groupoids. For arbitrary open groupoids \cite{PR12} a definition of morphism $G\to H$ can of course be based on a homomorphism of involutive quantales $\opens(G)\to\opens(H)$, but additional requirements are needed, in particular
due to the absence of multiplicative units. Besides, for groupoids equipped with non-trivial additional structure, such as non-\'etale Lie groupoids, a homomorphism of quantales only takes the structure of topological groupoid into account. By contrast, algebraic morphisms were proposed by Buneci and Stachura precisely as a way of generalizing Zakrewski's ideas to (not necessarily \'etale) locally compact groupoids, and Zakrzewski's own extension to the differential setting \cite{Zak90b} defines morphisms $G\to H$ of Lie groupoids and symplectic groupoids to be ``differential relations'', \ie, submanifolds of $G_1\times H_1$ satisfying suitable conditions. Our results show that, nevertheless, the identification of algebraic morphisms with quantale homomorphisms is meaningful at least for \'etale groupoids.

\begin{bibdiv}

\begin{biblist}

\bibselect{bibliography}

\end{biblist}

\end{bibdiv}
~\\
{\sc\small
Center for Mathematical Analysis, Geometry and Dynamical Systems\\ Mathematics Department\\
Instituto Superior T\'{e}cnico, Universidade de Lisboa\\
Av.\ Rovisco Pais 1, 1049-001 Lisboa, Portugal}\\
{\it E-mail:} {\sf pmr@math.tecnico.ulisboa.pt}
\end{document}